\theoremstyle{definition}
\newtheorem{thm}{Theorem}
\newtheorem{lem}{Lemma}[section]
\newtheorem{rem}[lem]{Remark}
\newtheorem{cor}[lem]{Corollary}
\numberwithin{equation}{section}
\newcommand{\F}{\mathbb{F}}
\newcommand{\Z}{\mathbb{Z}}
\title{\textsf{Adjoint cohomology of two-step nilpotent Lie superalgebras}}
\author{Wende Liu $^{a}$, Yong Yang $^{b,}$ \footnote{Correspondence: yangyong195888221@163.com}\setcounter{footnote}{-1}
\ and Xiankun Du $^{b}$
\\
\\ \small\textit{ $^{a}$School of Mathematics and Statistics, Hainan Normal University, Haikou 571158, China}
\\ \small\textit{$^{b}$School of Mathematics, Jilin University, Changchun 130012, China } }
\date{ }
\begin{document}
\maketitle

\begin{quotation}
\small\noindent \textbf{Abstract}:
 In this paper, we study the cup products and Betti numbers over cohomology superspaces of two-step nilpotent Lie  superalgebras with coefficients in the adjoint modules over an algebraically closed field  of characteristic zero. As an application, we prove that the cup product over the adjoint cohomology superspaces for Heisenberg Lie  superalgebras is trivial  and  we also determine the adjoint Betti numbers for Heisenberg Lie superalgebras by means of  Hochschild-Serre spectral sequences.

\vspace{0.2cm} \noindent{\textbf{Keywords}}: nilpotent Lie superalgebra; cup product; Betti number; spectral sequence

\vspace{0.2cm} \noindent{\textbf{Mathematics Subject Classification 2010}}: 17B30, 17B56

\end{quotation}
\setcounter{section}{-1}
\section{Introduction}
Cohomology of Lie superalgebras was studied by a number of people, including Fuchs and Leites \cite{1}, Fuchs \cite{2}, Scheunert and Zhang \cite{3}, Su and Zhang \cite{4}, Bouarroudj,  Grozman,  Lebedev and Leites \cite{5}, and Musson \cite{6}. As in Lie algebra case, Lie superalgebra cohomology has also many applications in the deformation theory (see \cite{2,6,7,8} for example).
The cup
product, introduced by Musson in \cite{6}, induces an associative superalgebra structure on the trivial cohomology. A standard fact is that the superalgebra structure on the  cochains with coefficients in the trivial modules, which is  arising from the cup product, is isomorphic to the super-exterior algebra generated by the dual superspace of the Lie superalgebra (\cite{2,6}). For some classes nilpotent Lie superalgebras, the cup products and Betti numbers
are studied (\cite{9,10,11}). In particular, Bai and Liu \cite{9} showed that the trivial cohomology for the Heisenberg Lie superalgebra of even center is isomorphic to a quotient algebra of the super-exterior algebra and  the trivial cohomology for the Heisenberg Lie superalgebra of odd center is isomorphic to the direct sum of a quotient algebra of the super-exterior algebra with a trivial-multiplication infinite-dimensional superalgebra.

For a symplectic vector space, one can define its Heisenberg Lie algebra by the symplectic form, which is a two-step nilpotent Lie algebra with one-dimensional center.
Heisenberg Lie algebras have attracted   special attentions in the modern mathematics and physics because of their applications  in the commutation relations in quantum mechanics.
For example,
Santharoubane \cite{12} gave a description of  cocycles, coboundaries and cohomological spaces for Heisenberg Lie algebras with coefficients in the trivial module over a field of characteristic zero. Sk\"{o}ldberg \cite{13} determined the generating function of  Betti numbers of  Heisenberg Lie algebras over a field of characteristic two by means of  discrete Morse theory.
Cairns and  Jambor \cite{14} extended Sk\"{o}ldberg's result to arbitrary  characteristic by directly computing the Betti numbers and showed that in characteristic two, unlike all the other cases, the Betti numbers are unimodal.
The adjoint cohomology of Heisenberg Lie algebras was studied in
the work  of Magnin \cite{15},  Cagliero and Tirao \cite{16} (see also \cite[Remark 1.2 and Example 1]{17}). Moreover, by considering a Heisenberg Lie algebra as the nilradical of a parabolic subalgebra of a simple Lie algebra of type A, Alvarez \cite{18} gave a full description of the adjoint
homology of the parabolic subalgebra as a module over its Levi factor.
Rodr\'{i}guez-Vallarte, Salgado  and  S\'{a}nchez-Valenzuela \cite{19} generalized Heisenberg Lie algebra by considering its supersymmetry, which is called Heisenberg Lie superalgebra, and proved that it admits neither supersymplectic nor superorthogonal invariant forms, however one-dimensional extensions by some appropriate homogeneous derivations do.

In this article, the cup products and Betti numbers on the adjoint cohomology are studied.
For two-step nilpotent Lie superalgebras,
we give a criterion for judging triviality of cup products and describe the Betti numbers by using the Hochschild-Serre spectral sequence.
As an application, for the Heisenberg Lie superalgebras, we prove that the cup products are trivial over the adjoint cohomology which is very different from the result in the trivial cohomology case.
We also depict the Betti numbers of the adjoint cohomology for the Heisenberg Lie superalgebras by using the Hochschild-Serre spectral sequence to the center.

\section{Preliminaries}

Throughout this paper, the ground field $\F$ is an algebraically closed field of characteristic zero and all vector spaces, algebras are over $\F$.

A \emph{Lie superalgebra} is a $\Z_2$-graded algebra whose multiplication satisfies
the skew-supersymmetry and the super Jacobi identity (see \cite{6}).
For a Lie superalgebra $\mathfrak{g}$, we inductively define the lower central series
\begin{equation*}
\mathfrak{g}^0=\mathfrak{g}, \qquad \mathfrak{g}^{i+1}=[\mathfrak{g},\mathfrak{g}^i],\quad i\geq 0.
\end{equation*}
$\mathfrak{g}$ is called $n$-step nilpotent if
\begin{equation*}
\mathfrak{g}^{n-1}\neq 0, \qquad \mathfrak{g}^{n}=0,
\end{equation*}
for some $n\in \mathbb{N}$ (see \cite{19}).

Set  $V=V_{\overline{0}}\bigoplus V_{\overline{1}}$ to be a $\mathbb{Z}_{2}$-graded vector space. Write
 $|v|$ for the $\mathbb{Z}_{2}$-degree for a $\mathbb{Z}_{2}$-homogeneous element $v$ in $V$.
Denote by $T (V)$ the tensor superalgebra generated by $V$. Then $T (V)$ is an associative superalgebra.
Note that $T (V)=\bigoplus\limits_{i\in \mathbb{N}_{0}}\left(\bigotimes^{i}V\right)$ also has a $\mathbb{Z}$-grading structure given by setting $\|v\|=1$ for any $v\in V$.
Hereafter $\| x \|$ denotes the $\mathbb{Z}$-degree of a $\mathbb{Z}$-homogeneous element $x$ in a $\mathbb{Z}$-graded space. 
The super-exterior algebra $\bigwedge^{\bullet} V$ generated by $V$ is defined as the quotient of $T (V)$ by the ideal generated by all elements of the form
$$x\otimes y+(-1)^{|x||y|}y\otimes x,\quad x,y\in V.$$
Then $\bigwedge^{\bullet} V$ is a graded-supercommutative superalgebra (see \cite{6}).
Write $\mathfrak{d}^{k}(r,s)$ for the dimension of $k$-homogeneous subspace of the super-exterior algebra generated by $r$ even generators and $s$ odd generators. Then
$$\mathfrak{d}^{k}(r,s)=\sum_{i=0}^{k}
\left(
                                        \begin{array}{c}
                                          r \\
                                          k-i \\
                                        \end{array}
                                      \right)
\left(
  \begin{array}{c}
    s+i-1 \\
    i \\
  \end{array}
\right).$$

Now, we introduce the definition of the cohomology of Lie superalgebras. For more details, the reader is referred to \cite{2,5,6}. Denote by $\mathfrak{g}$ a Lie superalgebra and $\{x_{i}\mid i\in I\}$ a homogeneous basis of $\mathfrak{g}$, write
$\{x_{i}^{\ast}\mid i\in I\}$
for the dual basis.
For 1-cochains with trivial coefficients, the differential is defined as an operation dual to the Lie-super bracket:
$$\mathrm{d}:\ \mathfrak{g}^{\ast}\longrightarrow   \bigwedge^{2} \mathfrak{g}^{\ast},$$
satisfying that
\begin{equation}\label{z}
\mathrm{d}(x_{i}^{\ast})=\sum_{  k<l }(-1)^{|x_{k}||x_{l}|+1}a_{kl}^{i}x_{k}^{\ast}\wedge x_{l}^{\ast}+\frac{1}{2}\sum_{  k\in I }a_{kk}^{i}x_{k}^{\ast^{2}},\  i\in I,
\end{equation}
where $a_{kl}^{i}$, $ i,k,l\in I$, are the structure constants of $\mathfrak{g}$ with respect to the basis, that is,
$$[x_{k},x_{l}]=\sum_{i\in I}a_{kl}^{i}x_{i},\quad   k,l\in I.$$
Suppose $k\geq 2$. For $k$-cochains with trivial coefficients, $\mathrm{d}$ is defined by the Leibniz rule. That is,
\begin{equation*}
\mathrm{d}(x\wedge y)=\mathrm{d}(x)\wedge y+(-1)^{\|x\|}x\wedge \mathrm{d}(y), \quad x,y\in \bigwedge^{\bullet}\mathfrak{g}^{\ast}.
\end{equation*}
 For cochains  with coefficients in any module $M$, we set
\begin{align} \label{22}
\mathrm{d}(m)=\sum_{i\in I}(-1)^{|x_{i}||m|} (x_{i}\cdot m)\otimes x_{i}^{\ast},
\end{align}
and
\begin{equation*}
 \mathrm{d}(m\otimes w)=\mathrm{d}(m)\wedge w+m\otimes \mathrm{d}(w),
\end{equation*}
for any $m\in M$, $w\in \bigwedge^{\bullet} \mathfrak{g}^{\ast}$  (see \cite[Lemma 3.2]{5}). Note that $M\bigotimes \bigwedge^{\bullet}\mathfrak{g}^{\ast}$ is a $\mathfrak{g}$-module in a natural manner:
$$x\cdot(m\otimes w)=(x\cdot m)\otimes w+(-1)^{|x||m|}m\otimes (x\cdot w),$$
where $x\in \mathfrak{g}$, $m\in M$, and $w\in \bigwedge^{\bullet} \mathfrak{g}^{\ast}$.
We obtain that $\mathrm{d}$ is a $\mathfrak{g}$-module homomorphism and $\mathrm{d}^{2}=0$.
Denote by $\mathrm{H}^{\bullet}(\mathfrak{g},M)$ the \emph{cohomology} of $\mathfrak{g}$ with coefficients in $M$ defined by the cochain complex $(M\bigotimes \bigwedge^{\bullet}\mathfrak{g}^{\ast}, \mathrm{d})$. From Eq. (\ref{22}), we have
\begin{align*}
\mathrm{H}^{0}(\mathfrak{g},M)=\{m\in M\mid \mathfrak{g}\cdot m=0\}.
\end{align*}
Denote by $C(\mathfrak{g})$ the center of $\mathfrak{g}$.
Then
\begin{align}\label{vanish}
\mathrm{H}^{0}(\mathfrak{g},\mathfrak{g})=C(\mathfrak{g}).
\end{align}

\section{Two-step nilpotent Lie superalgebra}
In this section, we study the adjoint cohomology of two-step nilpotent Lie superalgebras.
Let us first recall the definition of the cup product for Lie superalgebras, introduced by Musson in \cite{6}.
Suppose that $S_{n}$ is the symmetric group of degree $n$. For a $\sigma\in S_{n}$, set $\varepsilon(\sigma)$ to be the sign of $\sigma$
and denote the set of inversions of $\sigma$ by
$$\mathrm{Inv}(\sigma)=\{(i,j)|\ i<j\ \mathrm{and}\ \sigma(i)>\sigma(j)\}.$$
Denote by $\mathfrak{g}$ a Lie superalgebra and $M$ a $\mathfrak{g}$-module.
Suppose that $\star: \bigotimes^{2} M\longrightarrow M$, $m_{1}\otimes m_{2}\mapsto m_{1}\star m_{2}$ is a homomorphism of $\mathfrak{g}$-modules.
For $\sigma\in S_{p+q}$, $f\in M\bigotimes \bigwedge^{q} \mathfrak{g}^{\ast}$ and $g\in M\bigotimes \bigwedge^{p} \mathfrak{g}^{\ast}$,
define a bilinear map
\begin{align*}
F_{\sigma}: M\bigotimes \bigwedge^{q} \mathfrak{g}^{\ast} \times M\bigotimes \bigwedge^{p} \mathfrak{g}^{\ast}&\longrightarrow
M\bigotimes \bigwedge^{p+q} \mathfrak{g}^{\ast}
\end{align*}
such that
$$F_{\sigma}(f,g)(x)=(-1)^{|x_{\sigma}^{\uppercase\expandafter{\romannumeral1}}||g|}\varepsilon(\sigma)\gamma(x,\sigma)f(x_{\sigma}^{\uppercase\expandafter{\romannumeral1}}
)\star g(x_{\sigma}^{\uppercase\expandafter{\romannumeral2}}),$$
where $x=(x_{1},\ldots,x_{p+q})\in \mathfrak{g}^{p+q}$, $\gamma(x,\sigma)=\prod\limits_{(i,j)\in\mathrm{Inv}(\sigma)}(-1)^{|x_{\sigma(i)}||x_{\sigma(j)}|}$, $x_{\sigma}^{\uppercase\expandafter{\romannumeral1}}=(x_{\sigma(1)},\ldots, x_{\sigma(q)})$ and $x_{\sigma}^{\uppercase\expandafter{\romannumeral2}}=(x_{\sigma(1+q)},\ldots, x_{\sigma(p+q)})$.
Then the \emph{cup product} of $f$ and $g$ is defined by
$$f\cup g=(1/p!q!)\sum_{\sigma\in S_{p+q}}F_{\sigma}(f,g)$$
(see \cite[16.5.1]{6}). Note that the cup product and the differential are compatible:
$$\mathrm{d}(f\cup g)=\mathrm{d}f\cup g+(-1)^{q}f\cup \mathrm{d}g$$
(see  \cite[Lemma 16.5.5]{6}). Moreover, the cup product induces a $\mathbb{Z}$-graded superalgebra structure on
$M\bigotimes\bigwedge^{\bullet}\mathfrak{g}^{\ast}=\bigoplus\limits_{i\in \mathbb{N}_{0}}\left(M\bigotimes\bigwedge^{i}\mathfrak{g}^{\ast}\right)$
and $\mathrm{H}^{\bullet}(\mathfrak{g},M)=\bigoplus\limits_{i\in \mathbb{N}_{0}}\mathrm{H}^{i}(\mathfrak{g},M)$.
For any $x\in \mathfrak{g}$ and $f\in M\bigotimes \bigwedge^{n+1}\mathfrak{g}^{\ast}$, define $f_{x}\in M\bigotimes \bigwedge^{n}\mathfrak{g}^{\ast}$ by
$$f_{x}(x_{1},\ldots,x_{n})=f(x,x_{1},\ldots,x_{n}),$$
where $x_{1},\ldots,x_{n}\in\mathfrak{g}$. Moreover, from \cite[Lemma 16.5.4]{6}, we obtain the following lemma:
\begin{lem}\label{chain}
For $x\in \mathfrak{g}$ and $f,g\in M\bigotimes \bigwedge^{\bullet}\mathfrak{g}^{\ast}$,
$$(f\cup g)_{x}=(-1)^{|x||g|}f_{x}\cup g+(-1)^{\|f\|}f\cup g_{x}.$$
\end{lem}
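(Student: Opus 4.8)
The plan is to evaluate both sides of the identity on arguments $(x_{1},\dots,x_{N-1})$ with $N=p+q$ (write $q=\|f\|$, $p=\|g\|$) and to match the resulting group sums term by term. Throughout I take $x,f,g$ homogeneous, both for the $\Z_{2}$-grading and for the $\Z$-grading, the general case following by bilinearity; recall $f_{x}\in M\bigotimes\bigwedge^{q-1}\mathfrak{g}^{\ast}$ satisfies $|f_{x}|=|f|+|x|$ and $\|f_{x}\|=q-1$, and likewise for $g_{x}$. I would first replace the defining sum over all of $S_{p+q}$ by its shuffle form: since every cochain is skew-supersymmetric in its arguments, $F_{\sigma}(f,g)$ is unchanged when $\sigma$ is precomposed with a permutation of the block $\{1,\dots,q\}$ or of the block $\{q+1,\dots,p+q\}$, so that $f\cup g=\sum_{\sigma}F_{\sigma}(f,g)$ with $\sigma$ ranging over the $(q,p)$-shuffles, i.e.\ the $\sigma\in S_{p+q}$ with $\sigma(1)<\cdots<\sigma(q)$ and $\sigma(q+1)<\cdots<\sigma(p+q)$. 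Set $\mathbf{x}=(x_{1},\dots,x_{N-1})$ and $y=(x,x_{1},\dots,x_{N-1})$, so $y_{1}=x$ and $y_{i}=x_{i-1}$ for $i\geq2$; then $(f\cup g)_{x}(\mathbf{x})=\sum_{\sigma}F_{\sigma}(f,g)(y)$. As the value $1$ is the least element of its block, each shuffle satisfies either $\sigma(1)=1$ (so $x$ is fed to $f$, as its first slot) or $\sigma(q+1)=1$ (so $x$ is fed to $g$, as its first slot); call these two sets of shuffles $A$ and $B$, and split the sum accordingly.

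For $\sigma\in A$, deleting position $1$ and value $1$ gives a bijection $\sigma\mapsto\tau$, $\tau(i)=\sigma(i+1)-1$, onto the $(q-1,p)$-shuffles of $\{1,\dots,N-1\}$, under which $y_{\sigma(i)}=x_{\tau(i-1)}$ for $i\geq2$. Because $\sigma(1)=1$ lies in no inversion of $\sigma$, one gets $\varepsilon(\sigma)=\varepsilon(\tau)$ and $\gamma(y,\sigma)=\gamma(\mathbf{x},\tau)$, while the prefactor picks up exactly one extra $|x|$ in the exponent, as the $\Z_{2}$-degree of the first block of $\sigma$ is $|x|$ plus that of the first block of $\tau$. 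Using $f(x,\dots)=f_{x}(\dots)$ this yields $F_{\sigma}(f,g)(y)=(-1)^{|x||g|}F_{\tau}(f_{x},g)(\mathbf{x})$, and summing over $A$ gives $(-1)^{|x||g|}(f_{x}\cup g)(\mathbf{x})$.

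For $\sigma\in B$, deleting value $1$ (which sits in position $q+1$) gives a bijection $\sigma\mapsto\tau$ onto the $(q,p-1)$-shuffles of $\{1,\dots,N-1\}$, with $y_{\sigma(i)}=x_{\tau(i)}$ for $i\le q$ and $y_{\sigma(i)}=x_{\tau(i-1)}$ for $i\ge q+2$. Here position $q+1$ accounts for precisely the $q$ inversions $(1,q+1),\dots,(q,q+1)$, so $\varepsilon(\sigma)=(-1)^{q}\varepsilon(\tau)$; moreover those $q$ inversions contribute $(-1)^{|x|d}$ to $\gamma(y,\sigma)$, where $d$ is the $\Z_{2}$-degree of the first block (the same for $\sigma$ and $\tau$), so $\gamma(y,\sigma)=(-1)^{|x|d}\gamma(\mathbf{x},\tau)$. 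The key point is that this spurious $(-1)^{|x|d}$ together with the prefactor $(-1)^{d|g|}$ recombine into $(-1)^{d|g_{x}|}$, since $|g_{x}|=|g|+|x|$. With $g(x,\dots)=g_{x}(\dots)$ this gives $F_{\sigma}(f,g)(y)=(-1)^{q}F_{\tau}(f,g_{x})(\mathbf{x})$; as $q=\|f\|$, summing over $B$ gives $(-1)^{\|f\|}(f\cup g_{x})(\mathbf{x})$. Adding the $A$- and $B$-contributions proves the lemma.

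I expect the only real obstacle to be the sign accounting in the $B$-case: one must simultaneously handle the Koszul-type sign $(-1)^{q}$ extracted from $\varepsilon(\sigma)$, the two quadratic-in-$|x|$ signs extracted from $\gamma$ and from the $(-1)^{d|g|}$ prefactor, and the degree shift $|g_{x}|=|g|+|x|$, and check that everything cancels except $(-1)^{q}=(-1)^{\|f\|}$; the $A$-case is the same computation, easier because the value $1$ is inversion-free. The degenerate cases $q=0$ or $p=0$ (where $f_{x}$, resp.\ $g_{x}$, is zero by convention) are immediate. This is, in effect, what underlies \cite[Lemma 16.5.4]{6}.
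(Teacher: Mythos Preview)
Your argument is correct: the reduction to $(q,p)$-shuffles, the $A/B$ split according to whether $\sigma(1)=1$ or $\sigma(q+1)=1$, and the sign bookkeeping in both cases all check out. In particular, your handling of the $B$-case is right: the $q$ extra inversions $(i,q+1)$ produce $(-1)^{q}$ from $\varepsilon$ and $(-1)^{|x|d}$ from $\gamma$, and the latter combines with the prefactor $(-1)^{d|g|}$ to give $(-1)^{d|g_{x}|}$ via $|g_{x}|=|g|+|x|$, leaving only the clean $(-1)^{\|f\|}$.

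As for comparison with the paper: the paper does not supply its own proof of this lemma at all --- it simply records the statement as a consequence of \cite[Lemma 16.5.4]{6} and moves on. Your write-up therefore goes further than the paper does, giving a self-contained derivation from the shuffle description of the cup product; this is essentially the computation that underlies Musson's lemma, so there is no divergence in method, only in level of detail.
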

\begin{rem}
$(\bigwedge^{\bullet}\mathfrak{g}^{\ast},\bigcup)$ is a graded-supercommutative associative superalgebra (see \cite[Example 16.5.6]{6}). Moreover, it is isomorphic to the super-exterior algebra of $\mathfrak{g}^{\ast}$.
\end{rem}
\begin{thm}
For $f,g,h\in M\bigotimes\bigwedge^{\bullet}\mathfrak{g}^{\ast}$, the following conclusions hold:

(1) if $\star$ satisfies the skew-supersymmetry, then $$g\cup f=(-1)^{|f||g|+\|f\|\|g\|+1}f\cup g,$$

(2) if $\star$ satisfies the super Jacobi identity, then $$f\cup(g\cup h)=(f\cup g)\cup h+(-1)^{|f||g|+\|f\|\|g\|}g\cup (f\cup h).$$
\begin{proof}
(1) Use induction on $\|f\|+\|g\|$. From Lemma \ref{chain} and induction hypothesis, we have
\begin{align*}
(f\cup g)_{x}=&(-1)^{|x||g|}f_{x}\cup g+(-1)^{\|f\|}f\cup g_{x}\\
=&(-1)^{|x||g|}\{(-1)^{|f_{x}||g|+\|f_{x}\|\|g\|+1}g\cup f_{x}\}\\
&+(-1)^{\|f\|}\{(-1)^{|f||g_{x}|+\|f\|\|g_{x}\|+1}g_{x}\cup f\}\\
=&(-1)^{|f||g|+\|f\|\|g\|+1}\{(-1)^{|x||f|}g_{x}\cup f+(-1)^{\|g\|}g\cup f_{x}\}\\
=&(-1)^{|f||g|+\|f\|\|g\|+1}(g\cup f)_{x}.
\end{align*}
The proof of $(2)$ is similar to $(1)$.
\end{proof}
\end{thm}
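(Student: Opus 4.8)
The plan is to establish both identities by induction on the total $\mathbb{Z}$-degree --- on $\|f\|+\|g\|$ for (1) and on $\|f\|+\|g\|+\|h\|$ for (2) --- assuming throughout that $f,g$ (and $h$) are homogeneous for both the $\mathbb{Z}_{2}$- and the $\mathbb{Z}$-grading. The engine is Lemma \ref{chain}, together with the elementary fact that a cochain $u\in M\otimes\bigwedge^{n}\mathfrak{g}^{\ast}$ with $n\geq 1$ is determined by its contractions $u_{x}$, $x\in\mathfrak{g}$ (if all the $u_{x}$ vanish, so does $u$). In both statements every cup product that occurs lies in the same homogeneous component $M\otimes\bigwedge^{N}\mathfrak{g}^{\ast}$, with $N=\|f\|+\|g\|$ (resp. $N=\|f\|+\|g\|+\|h\|$); so once $N\geq 1$ it suffices to check that the two sides have equal $x$-contraction for every $x$, and Lemma \ref{chain} rewrites such a contraction as a combination of cup products of strictly smaller total $\mathbb{Z}$-degree, to which the induction hypothesis applies. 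Two grading facts will be used repeatedly: $\|f_{x}\|=\|f\|-1$ when $\|f\|\geq 1$ (and $f_{x}=0$ when $\|f\|=0$), and $|f_{x}|=|f|+|x|$, i.e. contraction by $x$ changes the $\mathbb{Z}_{2}$-degree by $|x|$.

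For (1), the base case $\|f\|=\|g\|=0$ means $f,g\in M$ and $f\cup g=f\star g$, so the claimed formula becomes $g\star f=(-1)^{|f||g|+1}f\star g$, which is exactly the skew-supersymmetry of $\star$. For the inductive step I would expand $(f\cup g)_{x}$ by Lemma \ref{chain} into a sum of $f_{x}\cup g$ and $f\cup g_{x}$ (terms with a contracted degree-zero factor being $0$ on both sides), apply the induction hypothesis to each --- each of total $\mathbb{Z}$-degree one less --- to turn them into $g\cup f_{x}$ and $g_{x}\cup f$ up to sign, and reassemble the outcome, again via Lemma \ref{chain}, into $(g\cup f)_{x}$. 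What remains is to check that the accumulated exponent equals $|f||g|+\|f\|\|g\|+1$, using the two grading facts above together with $2|x||g|\equiv 0$ and $-\|g\|\equiv\|g\|\pmod 2$. This is precisely the computation displayed for part (1).

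For (2) the scheme is identical. The base case $\|f\|=\|g\|=\|h\|=0$ asserts that $a\mapsto f\star a$ is a super-derivation of $\star$, i.e. $f\star(g\star h)=(f\star g)\star h+(-1)^{|f||g|}g\star(f\star h)$, which is the super Jacobi identity for $\star$. For $N\geq 1$ I would compute the $x$-contraction of each of $f\cup(g\cup h)$, $(f\cup g)\cup h$ and $g\cup(f\cup h)$ by applying Lemma \ref{chain} twice --- peeling off the outer, then the inner cup product --- so that each contraction becomes a sum of triple cup products in which exactly one of $f,g,h$ is replaced by its contraction, hence of total $\mathbb{Z}$-degree $N-1$; the induction hypothesis applied to each such term (and the vanishing of terms in which a degree-zero factor got contracted) reduces the identity to an equality of signs. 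Note that (1) is not needed here --- only Lemma \ref{chain} and the inductive hypothesis for (2).

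The main obstacle is combinatorial rather than conceptual: the contractions feed factors $|x|$ into many signs while Lemma \ref{chain} contributes factors $(-1)^{\|f\|}$, $(-1)^{\|g\|}$ (and $(-1)^{\|f\|+\|g\|}$ in the triple case), and one must verify these cancel against the signs produced by the induction hypothesis. To keep this manageable I would fix homogeneity of all arguments from the outset, collect the terms of each ``shape'' --- $g\cup f_{x}$ versus $g_{x}\cup f$ in (1), and the analogous shapes in (2) --- separately, and only then compare exponents modulo $2$, discarding even multiples of $|x|$ as they appear.
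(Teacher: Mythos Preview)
Your proposal is correct and follows the same route as the paper: induction on the total $\mathbb{Z}$-degree, using Lemma~\ref{chain} to reduce $(f\cup g)_{x}$ (and the triple analogues) to cup products of smaller degree and then reassembling via Lemma~\ref{chain} again. You are in fact more explicit than the paper---spelling out the base cases $\|f\|=\|g\|=0$ (resp.\ $\|f\|=\|g\|=\|h\|=0$), the grading identities $\|f_{x}\|=\|f\|-1$ and $|f_{x}|=|f|+|x|$, and the strategy for (2)---whereas the paper only displays the sign computation for (1) and declares (2) ``similar''.
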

\begin{rem}
It shows that the cup product on the adjoint cohomology in general is neither unitary nor graded-supercommutative nor associtive, which is different from the trivial cohomology case (see  \cite[Example 16.5.6]{6}, \cite{9} and \cite{10}).
\end{rem}
In particular, $M=\mathfrak{g}$ and $\star$ is the Lie-super bracket. The cup product for the adjoint cohomology can be described by the following theorem.
\begin{thm}\label{cup}
Suppose that $\mathfrak{g}$ is a Lie superalgebra and  $\{x_{i}\mid i\in I\}$ is a basis of $\mathfrak{g}$.
Then the cup product on $\mathfrak{g}\bigotimes\bigwedge^{\bullet}\mathfrak{g}^{\ast}$ is given by
\begin{equation*}
 (x_{i}\otimes f^{\alpha}_{n_{i}})\cup(x_{j}\otimes f^{\beta}_{n_{j}})=(-1)^{|f^{\alpha}_{n_{i}}||x_{j}|}[x_{i},x_{j}]\otimes(f^{\alpha}_{n_{i}}\wedge f^{\beta}_{n_{j}}),
\end{equation*}
where
$$f^{\alpha}_{n_{i}}=x_{\alpha_{1}}^{\ast}\wedge\cdots \wedge x_{\alpha_{n_{i}}}^{\ast}\in\bigwedge^{n_{i}}\mathfrak{g}^{\ast},$$
and
$$f^{\beta}_{n_{j}}=x_{\beta_{1}}^{\ast}\wedge\cdots \wedge x_{\beta_{n_{j}}}^{\ast}\in\bigwedge^{n_{j}}\mathfrak{g}^{\ast},$$
for $n_{i}, n_{j}\in \mathbb{N}_{0}$ and $i,j,\alpha_{1},\ldots, \alpha_{n_{i}},\beta_{1}, \ldots,\beta_{n_{j}}\in I$.
\begin{proof}
Note that
\begin{equation*}
(x_{i}\otimes f^{\alpha}_{n_{i}})(x_{\sigma}^{\uppercase\expandafter{\romannumeral1}}
)\star (x_{j}\otimes f^{\beta}_{n_{j}})(x_{\sigma}^{\uppercase\expandafter{\romannumeral2}})
=f^{\alpha}_{n_{i}}(x_{\sigma}^{\uppercase\expandafter{\romannumeral1}}
)f^{\beta}_{n_{j}}(x_{\sigma}^{\uppercase\expandafter{\romannumeral2}})[x_{i},x_{j}].
\end{equation*}
Moreover, for any $x\in \mathfrak{g}^{n_{i}+n_{j}}$,
\begin{align*}
&\left\{\left(x_{i}\otimes f^{\alpha}_{n_{i}}\right)\cup \left(x_{j}\otimes f^{\beta}_{n_{j}}\right)\right\}(x)\\
=&[x_{i},x_{j}]\left((1/n_{i}!n_{j}!)\sum_{\sigma\in S_{n_{i}+n_{j}}}(-1)^{|x_{\sigma}^{\uppercase\expandafter{\romannumeral1}}||x_{j}\otimes f^{\beta}_{n_{j}}|}\varepsilon(\sigma)\gamma(x,\sigma)
f^{\alpha}_{n_{i}}(x_{\sigma}^{\uppercase\expandafter{\romannumeral1}}
)f^{\beta}_{n_{j}}(x_{\sigma}^{\uppercase\expandafter{\romannumeral2}})\right)\\
=&(-1)^{|f^{\alpha}_{n_{i}}||x_{j}|}[x_{i},x_{j}](f^{\alpha}_{n_{i}}\cup f^{\beta}_{n_{j}})(x)\\
=&(-1)^{|f^{\alpha}_{n_{i}}||x_{j}|}[x_{i},x_{j}](f^{\alpha}_{n_{i}}\wedge f^{\beta}_{n_{j}})(x)\\
=&(-1)^{|f^{\alpha}_{n_{i}}||x_{j}|}\left\{[x_{i},x_{j}]\otimes(f^{\alpha}_{n_{i}}\wedge f^{\beta}_{n_{j}})\right\}(x).
\end{align*}
That is,
$(x_{i}\otimes f^{\alpha}_{n_{i}})\cup(x_{j}\otimes f^{\beta}_{n_{j}})=(-1)^{|f^{\alpha}_{n_{i}}||x_{j}|}[x_{i},x_{j}]\otimes(f^{\alpha}_{n_{i}}\wedge f^{\beta}_{n_{j}})$.
\end{proof}
\end{thm}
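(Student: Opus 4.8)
The plan is to unwind the definition of the cup product directly on the decomposable elements $f=x_{i}\otimes f^{\alpha}_{n_{i}}$ and $g=x_{j}\otimes f^{\beta}_{n_{j}}$, and to reduce everything to the already-known structure of the cup product \emph{with trivial coefficients}, namely the fact recorded in the Remark that $(\bigwedge^{\bullet}\mathfrak{g}^{\ast},\cup)$ is the super-exterior algebra of $\mathfrak{g}^{\ast}$, so that there $f^{\alpha}_{n_{i}}\cup f^{\beta}_{n_{j}}=f^{\alpha}_{n_{i}}\wedge f^{\beta}_{n_{j}}$.

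First I would evaluate the pointwise product occurring inside $F_{\sigma}$. Viewing cochains as multilinear forms as in the definition of $F_{\sigma}$, we have $f(x_{\sigma}^{\mathrm{I}})=f^{\alpha}_{n_{i}}(x_{\sigma}^{\mathrm{I}})\,x_{i}$ and $g(x_{\sigma}^{\mathrm{II}})=f^{\beta}_{n_{j}}(x_{\sigma}^{\mathrm{II}})\,x_{j}$ with the two evaluations scalars, so bilinearity of $\star$ (the Lie bracket) gives $f(x_{\sigma}^{\mathrm{I}})\star g(x_{\sigma}^{\mathrm{II}})=f^{\alpha}_{n_{i}}(x_{\sigma}^{\mathrm{I}})\,f^{\beta}_{n_{j}}(x_{\sigma}^{\mathrm{II}})\,[x_{i},x_{j}]$. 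Substituting into $F_{\sigma}(f,g)$ and then into $f\cup g=(1/n_{i}!n_{j}!)\sum_{\sigma\in S_{n_{i}+n_{j}}}F_{\sigma}(f,g)$, the common vector $[x_{i},x_{j}]$ factors out of the summation, and we are left with $[x_{i},x_{j}]$ times a scalar sum indexed by $\sigma\in S_{n_{i}+n_{j}}$ carrying the weight $(-1)^{|x_{\sigma}^{\mathrm{I}}|\,|x_{j}\otimes f^{\beta}_{n_{j}}|}\varepsilon(\sigma)\gamma(x,\sigma)f^{\alpha}_{n_{i}}(x_{\sigma}^{\mathrm{I}})f^{\beta}_{n_{j}}(x_{\sigma}^{\mathrm{II}})$.

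The crucial step is to rewrite the sign $(-1)^{|x_{\sigma}^{\mathrm{I}}|\,|x_{j}\otimes f^{\beta}_{n_{j}}|}$. Since $|x_{j}\otimes f^{\beta}_{n_{j}}|=|x_{j}|+|f^{\beta}_{n_{j}}|$, it splits as $(-1)^{|x_{\sigma}^{\mathrm{I}}|\,|x_{j}|}(-1)^{|x_{\sigma}^{\mathrm{I}}|\,|f^{\beta}_{n_{j}}|}$. For a term to contribute we need $f^{\alpha}_{n_{i}}(x_{\sigma}^{\mathrm{I}})\neq 0$, which forces the $\mathbb{Z}_{2}$-degree of the sub-tuple $x_{\sigma}^{\mathrm{I}}$ to equal $|f^{\alpha}_{n_{i}}|$; hence on the support of the sum $(-1)^{|x_{\sigma}^{\mathrm{I}}|\,|x_{j}|}=(-1)^{|f^{\alpha}_{n_{i}}|\,|x_{j}|}$ is a $\sigma$-independent constant, which I pull out front. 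What remains, $(1/n_{i}!n_{j}!)\sum_{\sigma}(-1)^{|x_{\sigma}^{\mathrm{I}}|\,|f^{\beta}_{n_{j}}|}\varepsilon(\sigma)\gamma(x,\sigma)f^{\alpha}_{n_{i}}(x_{\sigma}^{\mathrm{I}})f^{\beta}_{n_{j}}(x_{\sigma}^{\mathrm{II}})$, is exactly $(f^{\alpha}_{n_{i}}\cup f^{\beta}_{n_{j}})(x)$ for the trivial-coefficient cup product (scalar multiplication in $\F$ in the role of $\star$), which by the Remark equals $(f^{\alpha}_{n_{i}}\wedge f^{\beta}_{n_{j}})(x)$. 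Finally I reinterpret $[x_{i},x_{j}]\,(f^{\alpha}_{n_{i}}\wedge f^{\beta}_{n_{j}})(x)$ as the evaluation at $x$ of $[x_{i},x_{j}]\otimes(f^{\alpha}_{n_{i}}\wedge f^{\beta}_{n_{j}})$, yielding the claimed identity; the argument is uniform in the cases $n_{i}=0$ or $n_{j}=0$ (empty wedge). I expect the only genuinely delicate point to be this support argument in the sign manipulation, i.e. justifying that $|x_{\sigma}^{\mathrm{I}}|$ may be replaced by $|f^{\alpha}_{n_{i}}|$ inside the exponent because every other $\sigma$ contributes nothing; the rest is bookkeeping, matching the normalization $1/n_{i}!n_{j}!$ and the combinatorial weight $\varepsilon(\sigma)\gamma(x,\sigma)$ against the defining formula of the cup product with trivial coefficients.
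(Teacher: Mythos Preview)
Your proposal is correct and follows essentially the same route as the paper: evaluate $f(x_{\sigma}^{\mathrm{I}})\star g(x_{\sigma}^{\mathrm{II}})$, factor out $[x_{i},x_{j}]$, and recognize the residual sum as the trivial-coefficient cup product $f^{\alpha}_{n_{i}}\cup f^{\beta}_{n_{j}}=f^{\alpha}_{n_{i}}\wedge f^{\beta}_{n_{j}}$. The only difference is that you make explicit the support argument (replacing $|x_{\sigma}^{\mathrm{I}}|$ by $|f^{\alpha}_{n_{i}}|$ on nonvanishing terms) which the paper leaves implicit in the passage from the second to the third displayed line.
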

\begin{cor}
If $\mathfrak{g}$ is an $n$-step nilpotent Lie superalgebra, then
$$\underbrace{\left(\mathfrak{g}\bigotimes\bigwedge^{\bullet}\mathfrak{g}^{\ast}\right)\bigcup\cdots\bigcup\left(\mathfrak{g}\bigotimes\bigwedge^{\bullet}\mathfrak{g}^{\ast}\right)}_{n+1}=0.$$
\end{cor}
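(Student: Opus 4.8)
The plan is to filter the cup-product superalgebra $\mathfrak{g}\bigotimes\bigwedge^{\bullet}\mathfrak{g}^{\ast}$ by the lower central series of $\mathfrak{g}$ and to track how the cup product moves through this filtration, using the explicit formula of Theorem \ref{cup}. Concretely, for $k\geq 0$ put $W^{(k)}=\mathfrak{g}^{k}\bigotimes\bigwedge^{\bullet}\mathfrak{g}^{\ast}$, a subspace of $W^{(0)}=\mathfrak{g}\bigotimes\bigwedge^{\bullet}\mathfrak{g}^{\ast}$; since $\mathfrak{g}$ is $n$-step nilpotent we have $W^{(n)}=\mathfrak{g}^{n}\bigotimes\bigwedge^{\bullet}\mathfrak{g}^{\ast}=0$. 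The heart of the argument is the containment $W^{(i)}\bigcup W^{(j)}\subseteq W^{(i+j+1)}$, from which an induction on the number of factors finishes the proof.

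First I would record the standard lower-central-series estimate $[\mathfrak{g}^{i},\mathfrak{g}^{j}]\subseteq\mathfrak{g}^{i+j+1}$ for all $i,j\geq 0$. One proves this by induction on $i$: the case $i=0$ is the definition $[\mathfrak{g},\mathfrak{g}^{j}]=\mathfrak{g}^{j+1}$, and for $i\geq 1$ one writes $\mathfrak{g}^{i}=[\mathfrak{g},\mathfrak{g}^{i-1}]$ and applies the super Jacobi identity to get $[[\mathfrak{g},\mathfrak{g}^{i-1}],\mathfrak{g}^{j}]\subseteq[\mathfrak{g},[\mathfrak{g}^{i-1},\mathfrak{g}^{j}]]+[\mathfrak{g}^{i-1},[\mathfrak{g},\mathfrak{g}^{j}]]$, then uses the induction hypothesis on the two summands, $[\mathfrak{g}^{i-1},\mathfrak{g}^{j}]\subseteq\mathfrak{g}^{i+j}$ and $[\mathfrak{g}^{i-1},\mathfrak{g}^{j+1}]\subseteq\mathfrak{g}^{i+j+1}$, together with $[\mathfrak{g},\mathfrak{g}^{i+j}]=\mathfrak{g}^{i+j+1}$.

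Next I would prove $W^{(i)}\bigcup W^{(j)}\subseteq W^{(i+j+1)}$. Since the cup product is bilinear and graded over the exterior degree, it suffices to check this on homogeneous pure tensors $m\otimes f$ with $m\in\mathfrak{g}^{i}$ (resp. $m'\in\mathfrak{g}^{j}$) $\mathbb{Z}_{2}$-homogeneous and $f,g$ monomials in the $x_{k}^{\ast}$. Expanding $m,m'$ in a homogeneous basis of $\mathfrak{g}^{i},\mathfrak{g}^{j}$ and applying Theorem \ref{cup} termwise yields $(m\otimes f)\bigcup(m'\otimes g)=\pm\,[m,m']\otimes(f\wedge g)$, and since $[m,m']\in[\mathfrak{g}^{i},\mathfrak{g}^{j}]\subseteq\mathfrak{g}^{i+j+1}$ this product lies in $W^{(i+j+1)}$, irrespective of the sign and of whether $f\wedge g$ vanishes.

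Finally I would carry out an induction on the number $p$ of factors to show that every $p$-fold cup product of elements of $W^{(0)}$, formed with any parenthesization, lies in $W^{(p-1)}$: for $p=1$ this is just $W^{(0)}=W^{(0)}$, and for $p\geq 2$ the outermost multiplication splits a $p$-fold product into an $a$-fold product and a $b$-fold product with $a+b=p$ and $a,b\geq 1$; by induction these lie in $W^{(a-1)}$ and $W^{(b-1)}$, so by the previous step the whole product lies in $W^{(a-1)}\bigcup W^{(b-1)}\subseteq W^{(a+b-1)}=W^{(p-1)}$. Taking $p=n+1$ gives $\underbrace{W^{(0)}\bigcup\cdots\bigcup W^{(0)}}_{n+1}\subseteq W^{(n)}=0$, which is the assertion. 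The only points requiring care are the non-associativity of the cup product, which is precisely why the induction is run over all parenthesizations, and the sign bookkeeping in the reduction to pure tensors, which is irrelevant to the filtration containments; I do not expect either to be a genuine obstacle.
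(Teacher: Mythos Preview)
Your proposal is correct and is essentially the argument the paper has in mind: the corollary is stated without proof immediately after Theorem~\ref{cup}, and your filtration $W^{(k)}=\mathfrak{g}^{k}\otimes\bigwedge^{\bullet}\mathfrak{g}^{\ast}$ together with the standard estimate $[\mathfrak{g}^{i},\mathfrak{g}^{j}]\subseteq\mathfrak{g}^{i+j+1}$ is precisely how one makes that implication rigorous. Your explicit treatment of arbitrary parenthesizations (needed because the cup product is not associative, as the paper itself remarks) is more careful than the paper, but the underlying idea is the same.
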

\begin{cor}
If $\mathfrak{g}$ is a Lie superalgebra, then
$$\mathrm{H}^{0}(\mathfrak{g},\mathfrak{g})\bigcup\mathrm{H}^{\bullet}(\mathfrak{g},\mathfrak{g})=0.$$
\end{cor}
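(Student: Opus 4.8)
The plan is to show that the vanishing already holds at the level of cochains, which makes the cohomological statement immediate. Recall from Eq.~(\ref{vanish}) that $\mathrm{H}^{0}(\mathfrak{g},\mathfrak{g})=C(\mathfrak{g})$, the center of $\mathfrak{g}$: indeed $\mathfrak{g}\bigotimes\bigwedge^{0}\mathfrak{g}^{\ast}\cong\mathfrak{g}$, there are no $(-1)$-cochains, and by Eq.~(\ref{22}) a $0$-cochain $z$ is a cocycle precisely when $x_{i}\cdot z=[x_{i},z]=0$ for all $i\in I$, i.e. when $z\in C(\mathfrak{g})$. Hence every class in $\mathrm{H}^{0}(\mathfrak{g},\mathfrak{g})$ is represented by an element of the form $z\otimes 1$ with $z\in C(\mathfrak{g})$, where $1$ denotes the unit of $\bigwedge^{0}\mathfrak{g}^{\ast}=\F$.

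Next I would fix such a representative $z\otimes 1$ and an arbitrary cochain $\psi\in\mathfrak{g}\bigotimes\bigwedge^{\bullet}\mathfrak{g}^{\ast}$, and expand $\psi$ as a finite linear combination of basis cochains $x_{j}\otimes f^{\beta}_{n_{j}}$ with $f^{\beta}_{n_{j}}=x_{\beta_{1}}^{\ast}\wedge\cdots\wedge x_{\beta_{n_{j}}}^{\ast}$. Since $1$ has $\mathbb{Z}_{2}$-degree $\overline{0}$ and $1\wedge f^{\beta}_{n_{j}}=f^{\beta}_{n_{j}}$, Theorem~\ref{cup} applied with $n_{i}=0$ and $f^{\alpha}_{0}=1$ gives
$$(z\otimes 1)\cup(x_{j}\otimes f^{\beta}_{n_{j}})=(-1)^{|1|\,|x_{j}|}[z,x_{j}]\otimes\bigl(1\wedge f^{\beta}_{n_{j}}\bigr)=[z,x_{j}]\otimes f^{\beta}_{n_{j}}.$$
Because $z$ is central, $[z,x_{j}]=0$ for every $j\in I$, so each summand vanishes, and by bilinearity of the cup product $(z\otimes 1)\cup\psi=0$.

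Finally, since the product on $\mathrm{H}^{\bullet}(\mathfrak{g},\mathfrak{g})$ is induced by the cup product on cochains, for any $[\psi]\in\mathrm{H}^{\bullet}(\mathfrak{g},\mathfrak{g})$ and any $[z\otimes 1]\in\mathrm{H}^{0}(\mathfrak{g},\mathfrak{g})$ we get $[z\otimes 1]\cup[\psi]=[(z\otimes 1)\cup\psi]=[0]=0$, which is exactly the assertion $\mathrm{H}^{0}(\mathfrak{g},\mathfrak{g})\bigcup\mathrm{H}^{\bullet}(\mathfrak{g},\mathfrak{g})=0$. There is essentially no obstacle here: the only points requiring care are the identification $\mathrm{H}^{0}(\mathfrak{g},\mathfrak{g})=C(\mathfrak{g})$ and the bookkeeping of the sign and the empty-wedge factor in Theorem~\ref{cup}, both of which are routine. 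Conceptually, the statement simply reflects the fact that the cup product on the adjoint cochains factors through the bracket, which annihilates central elements.
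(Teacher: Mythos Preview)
Your proof is correct and follows exactly the same route as the paper: invoke Eq.~(\ref{vanish}) to identify $\mathrm{H}^{0}(\mathfrak{g},\mathfrak{g})$ with the center, and then apply Theorem~\ref{cup} so that every cup product with a central element factors through $[z,-]=0$. The paper merely states this in one line, while you have written out the details.
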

\begin{proof}
It can be obtained from Eq. (\ref{vanish}) and Theorem \ref{cup}.
\end{proof}

Moreover, for two-step nilpotent Lie superalgebras,
we give the following criteria for cup product triviality.
\begin{thm}\label{main}
Suppose that $\mathfrak{g}$ is a two-step nilpotent Lie superalgebra and $\mathcal{V}_{\mathfrak{g}}$ is any complement space of $[\mathfrak{g},\mathfrak{g}]$ in $\mathfrak{g}$. Let $\mathcal{V}_{\mathfrak{g}}^{\ast}$ be the dual space of $\mathcal{V}_{\mathfrak{g}}$. For $k\geq 1$, if the following two conditions are satisfied:

(1) $[\mathfrak{g},\mathfrak{g}]\bigotimes \bigwedge^{k}\mathcal{V}_{\mathfrak{g}}^{\ast}\subseteq \mathrm{Im}\ \mathrm{d}$,

(2) For any $f\in[\mathfrak{g},\mathfrak{g}]^{\ast}$ and $g\in\bigwedge^{k}\mathcal{V}_{\mathfrak{g}}^{\ast}$, if $\mathrm{d}(f)\wedge g=0$, then $f=0$ or $g=0$,\\
then the cup product on the adjoint cohomology of $\mathfrak{g}$ is trivial.
\end{thm}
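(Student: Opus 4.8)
The plan is to show that the cup product of any two cohomology classes in $\mathrm{H}^\bullet(\mathfrak{g},\mathfrak{g})$ is zero by choosing convenient cocycle representatives and exploiting the two-step nilpotency together with Theorem~\ref{cup}. First I would fix the decomposition $\mathfrak{g}=\mathcal{V}_{\mathfrak{g}}\oplus[\mathfrak{g},\mathfrak{g}]$ and note that, since $\mathfrak{g}$ is two-step nilpotent, $[\mathfrak{g},\mathfrak{g}]\subseteq C(\mathfrak{g})$, so the adjoint action of $[\mathfrak{g},\mathfrak{g}]$ is trivial and $\mathrm{d}$ restricted to the part of the complex built over $[\mathfrak{g},\mathfrak{g}]^\ast$ simplifies drastically; by Eq.~(\ref{z}) the differential $\mathrm{d}\colon\mathfrak{g}^\ast\to\bigwedge^2\mathfrak{g}^\ast$ kills $\mathcal{V}_{\mathfrak{g}}^\ast$ and sends $[\mathfrak{g},\mathfrak{g}]^\ast$ into $\bigwedge^2\mathcal{V}_{\mathfrak{g}}^\ast$. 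Using Theorem~\ref{cup}, a cup product $(x_i\otimes f)\cup(x_j\otimes g)$ is a nonzero multiple of $[x_i,x_j]\otimes(f\wedge g)$, so the product already vanishes on the chain level unless both $x_i,x_j$ lie outside the center; hence it suffices to control products of classes whose $\mathfrak{g}$-component lies in $\mathcal{V}_{\mathfrak{g}}$.

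The key step is then to argue that every cohomology class has a representative of a restricted form. I would decompose an arbitrary cocycle $\omega\in\mathfrak{g}\otimes\bigwedge^\bullet\mathfrak{g}^\ast$ according to $\mathfrak{g}=\mathcal{V}_{\mathfrak{g}}\oplus[\mathfrak{g},\mathfrak{g}]$ in the first tensor factor and according to the algebra generated by $\mathcal{V}_{\mathfrak{g}}^\ast$ and $[\mathfrak{g},\mathfrak{g}]^\ast$ in the exterior factor, writing $\omega$ as a sum of terms $z\otimes (a\wedge b)$ with $z$ either in $\mathcal{V}_{\mathfrak{g}}$ or in $[\mathfrak{g},\mathfrak{g}]$, $a\in\bigwedge^\bullet\mathcal{V}_{\mathfrak{g}}^\ast$, $b\in\bigwedge^\bullet[\mathfrak{g},\mathfrak{g}]^\ast$. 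Condition~(2) of the hypothesis is a nondegeneracy statement that lets me conclude that the only way a term supported on $[\mathfrak{g},\mathfrak{g}]^\ast$ in the exterior factor can appear in a cocycle is if it is forced by the cocycle condition, and condition~(1) guarantees that the "dangerous" part $[\mathfrak{g},\mathfrak{g}]\otimes\bigwedge^k\mathcal{V}_{\mathfrak{g}}^\ast$ is a coboundary and can be subtracted off. After this reduction, every class is represented by a cocycle whose terms either have $\mathfrak{g}$-component in $[\mathfrak{g},\mathfrak{g}]$ (hence cup to zero against anything by Theorem~\ref{cup}, since $[[\mathfrak{g},\mathfrak{g}],\mathfrak{g}]=0$) or have exterior part lying in $\bigwedge^\bullet\mathcal{V}_{\mathfrak{g}}^\ast$ only. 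For the latter, the cup product of $x_i\otimes f$ and $x_j\otimes g$ with $f,g\in\bigwedge^\bullet\mathcal{V}_{\mathfrak{g}}^\ast$ is $\pm[x_i,x_j]\otimes(f\wedge g)$, which lands in $[\mathfrak{g},\mathfrak{g}]\otimes\bigwedge^\bullet\mathcal{V}_{\mathfrak{g}}^\ast$; by condition~(1) this is a coboundary, hence zero in cohomology.

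Concretely, I would organize the computation as: (a) reduce to representatives, using that $\mathrm{d}$ of a general element produces, via Eq.~(\ref{z}) and the Leibniz rule, a term $\mathrm{d}(f)\wedge g$ whose vanishing is governed by condition~(2), so one can split off any summand of $\omega$ involving $[\mathfrak{g},\mathfrak{g}]^\ast$-factors and show it contributes nothing to the class; (b) observe that the surviving representatives pair under $\cup$ into $[\mathfrak{g},\mathfrak{g}]\otimes\bigwedge^{\bullet}\mathcal{V}_{\mathfrak{g}}^\ast$ by Theorem~\ref{cup}; (c) apply condition~(1) to see that this image is contained in $\mathrm{Im}\,\mathrm{d}$, so the cup product is $0$ in $\mathrm{H}^\bullet(\mathfrak{g},\mathfrak{g})$. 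I expect the main obstacle to be step~(a): making precise, via the two conditions, that one may always choose a cocycle representative with no $[\mathfrak{g},\mathfrak{g}]^\ast$ appearing in the exterior factor — this is where the bilinear nondegeneracy in condition~(2) must be leveraged carefully, keeping track of Koszul signs and of the fact that $\mathrm{d}$ on $[\mathfrak{g},\mathfrak{g}]^\ast$ has image in $\bigwedge^2\mathcal{V}_{\mathfrak{g}}^\ast$ while the action part of $\mathrm{d}$ from Eq.~(\ref{22}) only involves $\mathcal{V}_{\mathfrak{g}}$-directions because $[\mathfrak{g},\mathfrak{g}]$ acts trivially.
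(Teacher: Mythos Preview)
Your proposal is correct and follows essentially the same route as the paper: decompose $\mathfrak{g}\otimes\bigwedge^k\mathfrak{g}^\ast$ along $\mathfrak{g}=[\mathfrak{g},\mathfrak{g}]\oplus\mathcal{V}_{\mathfrak{g}}$ and $\mathfrak{g}^\ast=[\mathfrak{g},\mathfrak{g}]^\ast\oplus\mathcal{V}_{\mathfrak{g}}^\ast$, use conditions~(1) and~(2) to reduce cocycle representatives to a restricted subspace, and then invoke Theorem~\ref{cup} together with condition~(1) to see that the surviving cup products land in $[\mathfrak{g},\mathfrak{g}]\otimes\bigwedge^\bullet\mathcal{V}_{\mathfrak{g}}^\ast\subseteq\mathrm{Im}\,\mathrm{d}$. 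The only (harmless) difference is that the paper reduces representatives to $\bigl(\mathcal{V}_{\mathfrak{g}}\otimes\bigwedge^k\mathcal{V}_{\mathfrak{g}}^\ast\bigr)\oplus\bigl([\mathfrak{g},\mathfrak{g}]\otimes[\mathfrak{g},\mathfrak{g}]^\ast\wedge\bigwedge^{k-1}\mathcal{V}_{\mathfrak{g}}^\ast\bigr)$, whereas you allow the $[\mathfrak{g},\mathfrak{g}]$-coefficient part to carry an arbitrary exterior factor---your weaker reduction is still sufficient since anything with $[\mathfrak{g},\mathfrak{g}]$-coefficient already cups to zero by Theorem~\ref{cup}.
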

\begin{proof}
For $k\geq 1$, we have
$$\mathfrak{g}\bigotimes \bigwedge^{k}\mathfrak{g}^{\ast}=
\left([\mathfrak{g},\mathfrak{g}]\bigoplus\mathcal{V}_{\mathfrak{g}}\right)\bigotimes\left\{\bigoplus_{i=0}^{k}\left(\bigwedge^{i}[\mathfrak{g},\mathfrak{g}]^{\ast}
\bigwedge^{k-i}\mathcal{V}_{\mathfrak{g}}^{\ast}\right)
\right\}.$$
Since $[\mathfrak{g},\mathfrak{g}]\subseteq C(\mathfrak{g})$, from Eqs. $(\ref{z})$ and $(\ref{22})$, we have
\begin{equation*}
\mathrm{d}([\mathfrak{g},\mathfrak{g}]^{\ast})\subseteq\bigwedge^{2} \mathcal{V}_{\mathfrak{g}}^{\ast},\quad
 \mathrm{d}(\mathcal{V}_{\mathfrak{g}})\subseteq [\mathfrak{g},\mathfrak{g}]\bigotimes\mathcal{V}_{\mathfrak{g}}^{\ast},\quad
\mathrm{d}([\mathfrak{g},\mathfrak{g}])=\mathrm{d}(\mathcal{V}_{\mathfrak{g}}^{\ast})=0.
\end{equation*}
Set
$$\mathcal{W}_{\mathfrak{g}}^{k}=\mathcal{V}_{\mathfrak{g}}\bigotimes\bigwedge^{k}\mathcal{V}_{\mathfrak{g}}^{\ast},\quad
\mathcal{\overline{W}}_{\mathfrak{g}}^{k,i}=[\mathfrak{g},\mathfrak{g}]\bigotimes\left(\bigwedge^{i}[\mathfrak{g},\mathfrak{g}]^{\ast}\bigwedge^{k-i} \mathcal{V}_{\mathfrak{g}}^{\ast}\right).$$
Moreover, from the conditions $(1)$ and $(2)$,  $\mathrm{H}^{k}(\mathfrak{g},\mathfrak{g})$ is contained in
$\mathrm{Ker}\ \mathrm{d}\bigcap \left(\mathcal{W}_{\mathfrak{g}}^{k}\bigoplus \mathcal{\overline{W}}_{\mathfrak{g}}^{k,1}\right)$.
From Theorem \ref{cup}, we have
$\mathcal{\overline{W}}_{\mathfrak{g}}^{k,1}\bigcup\mathrm{H}^{\bullet}(\mathfrak{g},\mathfrak{g})=0$
and
$$\mathcal{W}_{\mathfrak{g}}^{k}\bigcup\mathcal{W}_{\mathfrak{g}}^{l}\subseteq \mathcal{\overline{W}}_{\mathfrak{g}}^{k+l,0}\subseteq \mathrm{Im}\ \mathrm{d},$$
for $k,l\geq 1$. The proof is complete.
\end{proof}

For a
Lie superalgebra, denote $k$-th Betti number by the dimension of the $k$-cohomology.
We are in position to study the Betti numbers of adjoint cohomology  for the two-step nilipotent Lie superalgebras.
A useful tool to compute the Betti numbers is spectral sequence (see \cite{9,10,11} for example). Suppose $I$ is an abelian ideal of $\mathfrak{g}$. Then there is a convergent spectral sequence $\{\mathrm{E}_{r}^{p,q},\mathrm{d}_{r}^{p,q}:
\mathrm{E}_{r}^{p,q}\rightarrow\mathrm{E}_{r}^{p+r,q-r+1}\}$,
called the \emph{Hochschild-Serre spectral sequence}, such that
\begin{align*}
\mathrm{E}_{2}^{k,s}=\mathrm{H}^{k}(\mathfrak{g}/I,\mathrm{H}^{s}(I,\mathfrak{g}))\Longrightarrow \mathrm{H}^{k+s}(\mathfrak{g},\mathfrak{g}),
\end{align*}
(see \cite[Theorem 16.6.6]{6}). Suppose that $I=C(\mathfrak{g})$. Then
\begin{align}\label{26}
\mathrm{E}_{2}^{k,s}=\mathrm{H}^{k}(\mathfrak{g}/C(\mathfrak{g}),\mathfrak{g})\bigotimes \bigwedge^{s}C(\mathfrak{g})^{\ast}\Longrightarrow \mathrm{H}^{k+s}(\mathfrak{g},\mathfrak{g}).
\end{align}
\begin{rem}
It shows that Theorem \ref{main} is only a sufficient condition of the cup product triviality, not a necessary condition.
\end{rem}
\begin{thm}\label{main1}
Suppose that $\mathfrak{g}$ is a two-step nilpotent Lie superalgebra and $\mathcal{\overline{V}}_{\mathfrak{g}}$ is any complement space of the center $C(\mathfrak{g})$ in $\mathfrak{g}$. Let $\mathcal{\overline{V}}_{\mathfrak{g}}^{\ast}$ be the dual space of $\mathcal{\overline{V}}_{\mathfrak{g}}$.

For $k\geq 1$, if the following two conditions are satisfied:

(1) $C(\mathfrak{g})\bigotimes \bigwedge^{k}\mathcal{\overline{V}}_{\mathfrak{g}}^{\ast}\subseteq \mathrm{Im}\ \mathrm{d}$,

(2) For any $f\in C(\mathfrak{g})^{\ast}$ and $g\in\bigwedge^{k}\mathcal{\overline{V}}_{\mathfrak{g}}^{\ast}$, if $\mathrm{d}(f)\wedge g=0$, then $f=0$ or $g=0$, \\
then $\mathrm{Ker}\ \mathrm{d}_{2}^{k,1}=0$. In particular, $\mathrm{E}_{\infty}^{k,1}=0$.
\end{thm}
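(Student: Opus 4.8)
The plan is to mimic the structure of the proof of Theorem~\ref{main}, transporting the decomposition argument there from the commutator subalgebra $[\mathfrak{g},\mathfrak{g}]$ to the center $C(\mathfrak{g})$. Since $\mathfrak{g}$ is two-step nilpotent we have $[\mathfrak{g},\mathfrak{g}]\subseteq C(\mathfrak{g})$, so the same computations that produce $\mathrm{d}(C(\mathfrak{g})^{\ast})\subseteq\bigwedge^{2}\mathcal{\overline{V}}_{\mathfrak{g}}^{\ast}$, $\mathrm{d}(\mathcal{\overline{V}}_{\mathfrak{g}})\subseteq C(\mathfrak{g})\otimes\mathcal{\overline{V}}_{\mathfrak{g}}^{\ast}$ and $\mathrm{d}(C(\mathfrak{g}))=\mathrm{d}(\mathcal{\overline{V}}_{\mathfrak{g}}^{\ast})=0$ go through verbatim from Eqs.~(\ref{z}) and~(\ref{22}). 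First I would fix a homogeneous basis of $\mathfrak{g}$ adapted to the splitting $\mathfrak{g}=C(\mathfrak{g})\oplus\mathcal{\overline{V}}_{\mathfrak{g}}$ and write out the resulting grading of the cochain space $\mathfrak{g}\otimes\bigwedge^{k}\mathfrak{g}^{\ast}$ by ``$C(\mathfrak{g})$-weight'', exactly as in Theorem~\ref{main}, so that the differential becomes block-triangular with respect to it.

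Next I would identify $\mathrm{E}_{2}^{k,1}=\mathrm{H}^{k}(\mathfrak{g}/C(\mathfrak{g}),\mathfrak{g})\otimes\bigwedge^{1}C(\mathfrak{g})^{\ast}$ with an explicit subquotient of cochains and describe $\mathrm{d}_{2}^{k,1}\colon\mathrm{E}_{2}^{k,1}\to\mathrm{E}_{2}^{k+2,0}=\mathrm{H}^{k+2}(\mathfrak{g}/C(\mathfrak{g}),\mathfrak{g})$ at the cochain level. The transgression-type map $\mathrm{d}_{2}$ is, up to the identifications coming from the Hochschild--Serre filtration, induced by wedging the $C(\mathfrak{g})^{\ast}$-factor with $\mathrm{d}$ applied to it: a class represented by $\sum_{t} m_{t}\otimes(\omega_{t}\wedge c_{t})$ with $c_{t}\in C(\mathfrak{g})^{\ast}$ and $\omega_{t}\in\bigwedge^{k}\mathcal{\overline{V}}_{\mathfrak{g}}^{\ast}$ is sent to the class of $\pm\sum_{t} m_{t}\otimes(\mathrm{d}(c_{t})\wedge\omega_{t})$. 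The point of condition~(1) is that $C(\mathfrak{g})\otimes\bigwedge^{k}\mathcal{\overline{V}}_{\mathfrak{g}}^{\ast}\subseteq\mathrm{Im}\,\mathrm{d}$ forces a $\mathrm{d}_{2}^{k,1}$-cocycle to have its $C(\mathfrak{g})$-component already killed in earlier pages, so a kernel element is represented purely by a $\mathcal{\overline{V}}_{\mathfrak{g}}$-valued cochain $\sum_{t} v_{t}\otimes(\omega_{t}\wedge c_{t})$; condition~(2) is then exactly what is needed to conclude that $\sum_{t} v_{t}\otimes(\mathrm{d}(c_{t})\wedge\omega_{t})=0$ in $\bigwedge^{k+2}\mathcal{\overline{V}}_{\mathfrak{g}}^{\ast}$ can only happen if the original cochain is itself zero (after separating out the $v_{t}$'s in a basis and using that $\mathrm{d}(c)\wedge\omega\ne 0$ whenever $c,\omega\ne 0$). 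Hence $\mathrm{Ker}\,\mathrm{d}_{2}^{k,1}=0$, and since $\mathrm{E}_{r}^{k,1}$ for $r\ge 3$ is a subquotient of $\mathrm{Ker}\,\mathrm{d}_{2}^{k,1}$, we get $\mathrm{E}_{\infty}^{k,1}=0$.

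The main obstacle, and the step I would spend the most care on, is pinning down precisely how $\mathrm{d}_{2}^{k,1}$ acts in terms of the concrete cochain representatives and the wedge-with-$\mathrm{d}(c)$ operation, including getting the signs right and checking that ``being a $\mathrm{d}_2$-cocycle'' really does force the representative into $\mathcal{\overline{V}}_{\mathfrak{g}}\otimes(\bigwedge^{k}\mathcal{\overline{V}}_{\mathfrak{g}}^{\ast}\wedge C(\mathfrak{g})^{\ast})$ rather than merely up to a $C(\mathfrak{g})$-valued correction. This is where condition~(1) does its work: it guarantees the $\mathcal{\overline{W}}$-type summand $C(\mathfrak{g})\otimes(\bigwedge^{1}C(\mathfrak{g})^{\ast}\wedge\bigwedge^{k}\mathcal{\overline{V}}_{\mathfrak{g}}^{\ast})$ contributes nothing to $\mathrm{E}_2$, so that $\mathrm{E}_2^{k,1}$ is carried entirely by the $\mathcal{W}$-type part $\mathcal{\overline{V}}_{\mathfrak{g}}\otimes(\cdots)$. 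Once that reduction is in place, the linear-algebra heart of the argument is the same non-degeneracy statement used in Theorem~\ref{main}, and the conclusion about $\mathrm{E}_{\infty}^{k,1}$ is immediate from the spectral sequence formalism.
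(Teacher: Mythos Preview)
Your plan is correct and follows the same logical skeleton as the paper: use the identification $\mathrm{E}_2^{k,1}=\mathrm{H}^k(\mathfrak{g}/C(\mathfrak{g}),\mathfrak{g})\otimes C(\mathfrak{g})^{\ast}$, invoke condition~(1) to reduce to $\mathcal{\overline{V}}_{\mathfrak{g}}$-valued representatives, and then use condition~(2) to see that the wedge-with-$\mathrm{d}(g)$ map is injective. The paper, however, carries this out more directly than you propose. Rather than chasing representatives through the Hochschild--Serre filtration at the level of the full complex $\mathfrak{g}\otimes\bigwedge^{\bullet}\mathfrak{g}^{\ast}$, it works entirely on the $\mathrm{E}_2$-page: because $\mathfrak{g}/C(\mathfrak{g})$ is abelian, every coboundary in $\mathfrak{g}\otimes\bigwedge^{k}(\mathfrak{g}/C(\mathfrak{g}))^{\ast}$ is $C(\mathfrak{g})$-valued, so condition~(1) makes $\mathrm{H}^k(\mathfrak{g}/C(\mathfrak{g}),\mathfrak{g})$ literally equal to the \emph{subspace} $\mathrm{Ker}\,\mathrm{d}\cap\bigl(\mathcal{\overline{V}}_{\mathfrak{g}}\otimes\bigwedge^{k}(\mathfrak{g}/C(\mathfrak{g}))^{\ast}\bigr)$, not merely a subquotient. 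That single observation dissolves exactly the ``main obstacle'' you flag: one then extends $\mathrm{d}_2^{k,1}$ to the linear map $\widetilde{\mathrm{d}_2^{k,1}}$ on all of $\bigl(\mathcal{\overline{V}}_{\mathfrak{g}}\otimes\bigwedge^{k}(\mathfrak{g}/C(\mathfrak{g}))^{\ast}\bigr)\otimes C(\mathfrak{g})^{\ast}$, given by $f\otimes g\mapsto f\wedge\mathrm{d}(g)$, and concludes $\mathrm{Ker}\,\mathrm{d}_2^{k,1}\subseteq\mathrm{Ker}\,\widetilde{\mathrm{d}_2^{k,1}}=0$ from condition~(2) with no sign-bookkeeping or representative ambiguity. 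Your route would reach the same destination, but the paper's shortcut of recognizing $\mathrm{H}^k(\mathfrak{g}/C(\mathfrak{g}),\mathfrak{g})$ as a genuine subspace is what makes the argument a few lines rather than a page.
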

\begin{proof}
From Eq. (\ref{26}), for $k\geq 1$, consider the mapping
\begin{align*}
\mathrm{d}_{2}^{k,1}: \mathrm{H}^{k}(\mathfrak{g}/C(\mathfrak{g}),\mathfrak{g})\bigotimes C(\mathfrak{g})^{\ast}&\longrightarrow
\mathrm{H}^{k+2}(\mathfrak{g}/C(\mathfrak{g}),\mathfrak{g}),
\\
 f\otimes g&\longmapsto f\wedge \mathrm{d} (g),
\end{align*}
where $f\in\mathrm{H}^{k}(\mathfrak{g}/C(\mathfrak{g}),\mathfrak{g})$, and $g\in C(\mathfrak{g})^{\ast}$.
Since $\mathfrak{g}/C(\mathfrak{g})$ is abelian, from the condition $(1)$,
$$\mathrm{H}^{k}(\mathfrak{g}/C(\mathfrak{g}),\mathfrak{g})=\mathrm{Ker}\ \mathrm{d}\bigcap\left\{\mathcal{\overline{V}}_{\mathfrak{g}}\bigotimes\bigwedge^{k}\left(\mathfrak{g}/C(\mathfrak{g})\right)^{\ast}\right \}.$$
Define the linear mapping
\begin{align*}
\widetilde{\mathrm{d}_{2}^{k,1}}: \left\{\mathcal{\overline{V}}_{\mathfrak{g}}\bigotimes\bigwedge^{k} (\mathfrak{g}/C(\mathfrak{g}))^{\ast}\right\}\bigotimes C(\mathfrak{g})^{\ast}&\longrightarrow
\mathcal{\overline{V}}_{\mathfrak{g}}\bigotimes\bigwedge^{k+2} (\mathfrak{g}/C(\mathfrak{g}))^{\ast},
\\
 f\otimes g&\longmapsto  f\wedge \mathrm{d}( g),
\end{align*}
where $f\in \mathcal{\overline{V}}_{\mathfrak{g}}\bigotimes\bigwedge^{k} (\mathfrak{g}/C(\mathfrak{g}))^{\ast}$, and $g\in C(\mathfrak{g})^{\ast}$. Then, from the condition $(2)$,
$$\mathrm{Ker}\ \mathrm{d}_{2}^{k,1}\subseteq \mathrm{Ker}\ \widetilde{ \mathrm{d}_{2}^{k,1}}=0.$$
Moreover, $\mathrm{E}_{\infty}^{k,1}=\mathrm{E}_{3}^{k,1}=0.$
\end{proof}

\section{Heisenberg Lie superalgebra}
If $\mathfrak{g}$ is a two-step nilpotent Lie superalgebra with a one-dimensional center, $\mathfrak{g}$ is called a \emph{Heisenberg Lie superalgebra}. All finite dimensional Heisenberg Lie superalgebras are divided into two classes, according to the parities of their centers, denoted by $\mathfrak{h}_{2m,n}$ and $\mathfrak{ba}_{n}$ (see \cite{19}).

$(1)\ \mathfrak{h}_{2m,n}$ has a homogeneous basis
$$\{z;x_{1},\ldots,x_{m},x_{m+1},\ldots,x_{2m}\mid y_{1},\ldots,y_{n}\},$$
where $|z|=|x_{1}|=\ldots=|x_{2m}|=\bar{0}$, $|y_{1}|=\ldots=|y_{n}|=\bar{1}$
and the non-zero brackets are given by
$$[x_{i},x_{m+i}]=[y_{j},y_{j}]=z, \quad 1\leq i\leq m\ \mathrm{and}\ 1\leq j\leq n.$$

$(2)\ \mathfrak{ba}_{n}$ has a homogeneous basis
$$\{x_{1},\ldots,x_{n}\mid z;y_{1},\ldots,y_{n}\},$$
where $|x_{1}|=\ldots=|x_{n}|=\bar{0}$, $|z|=|y_{1}|=\ldots=|y_{n}|=\bar{1}$
and the non-zero brackets are given by
$$[x_{i},y_{i}]=z, \quad 1\leq i\leq n.$$
Suppose that $\mathfrak{g}$ is a Heisenberg Lie superalgebra. Note that
$C(\mathfrak{g})=[\mathfrak{g},\mathfrak{g}]=\F z$. Then
$\mathrm{H}^{0}(\mathfrak{g},\mathfrak{g})=\F z.$
As an application of Theorems \ref{main} and \ref{main1}, we characterize below the cup products and Betti numbers of adjoint cohomology of Heisenberg Lie superalgebras. For a Heisenberg Lie superalgebra $\mathfrak{g}$,
we take
\begin{align*}
\mathcal{V}_{\mathfrak{g}}=\mathcal{\overline{V}}_{\mathfrak{g}}&=\left\{
                       \begin{array}{ll}
                        \mathrm{span}_{\F}\{x_{1},\ldots,x_{2m}\mid y_{1},\ldots,y_{n}\}, & \hbox{$\mathfrak{g}=\mathfrak{h}_{2m,n};$} \\
                        \mathrm{span}_{\F}\{x_{1},\ldots,x_{n}\mid y_{1},\ldots,y_{n}\}, & \hbox{$\mathfrak{g}=\mathfrak{ba}_{n},$}
                        \end{array}
                       \right.
\end{align*}
such that $\mathfrak{g}=[\mathfrak{g},\mathfrak{g}]\bigoplus \mathcal{V}_{\mathfrak{g}}=C(\mathfrak{g})\bigoplus \mathcal{\overline{V}}_{\mathfrak{g}}$.
At first,
from Eq. (\ref{z}), we have
\begin{align*}
(\mathrm{d}z^{\ast})_{\mathfrak{g}}&=\left\{
                       \begin{array}{ll}
                        -\sum\limits_{i=1}^{m}x_{i}^{\ast}\wedge x_{m+i}^{\ast}+\frac{1}{2}\sum\limits_{j=1}^{n}y_{j}^{\ast^{2}}, & \hbox{$\mathfrak{g}=\mathfrak{h}_{2m,n};$} \\
                        -\sum\limits_{i=1}^{n}x_{i}^{\ast}\wedge y_{i}^{\ast}, & \hbox{$\mathfrak{g}=\mathfrak{ba}_{n}.$}
                        \end{array}
                       \right.
\end{align*}
For $k\geq 0$,
set
\begin{equation*}
\psi_{\mathfrak{g}}^{k}:\bigwedge^{k}(\mathfrak{g}/\F z)^{\ast}\longrightarrow\bigwedge^{k+2}(\mathfrak{g}/\F z)^{\ast},\quad \alpha\mapsto \alpha\wedge (\mathrm{d}z^{\ast})_{\mathfrak{g}}.
\end{equation*}
From \cite[Lemma 4.3]{9} and the proofs of \cite[Theorems 4.1 and 4.2]{9}, we obtain the following lemma:
\begin{lem}\label{bai}
Suppose that $k\geq 0$ and $\mathfrak{g}$ is a Heisenberg Lie superalgebra.
Then
\begin{align*}
\mathrm{Ker}\ \psi_{\mathfrak{g}}^{k}&=\left\{
                       \begin{array}{ll}
                        0, & \hbox{$\mathfrak{g}=\mathfrak{h}_{2m,n};$} \\
                        \bigwedge^{k-2}(\mathfrak{ba}_{n}/\F z)^{\ast}\bigwedge \F (\mathrm{d}z^{\ast})_{\mathfrak{g}}\bigoplus \delta_{k,n}\F (x_{1}^{\ast}\wedge\cdots\wedge x_{n}^{\ast}), & \hbox{$\mathfrak{g}=\mathfrak{ba}_{n}.$}
                        \end{array}
                       \right.
\end{align*}
Moreover,
\begin{align*}
\mathrm{dim}\ \mathrm{Ker}\ \psi_{\mathfrak{g}}^{k}&=\left\{
                       \begin{array}{ll}
                        0, & \hbox{$\mathfrak{g}=\mathfrak{h}_{2m,n};$} \\
                        \sum\limits_{i=1}^{\lfloor\frac{k}{2}\rfloor}(-1)^{i-1}(\mathfrak{d}^{k-2i}(n,n)-\delta_{k-2i,n})+\delta_{k,n}, & \hbox{$\mathfrak{g}=\mathfrak{ba}_{n}.$}
                        \end{array}
                       \right.
\end{align*}
\end{lem}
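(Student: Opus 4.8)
The plan is to reduce the lemma to the structure of the super-exterior algebra $\bigwedge^\bullet(\mathfrak g/\F z)^\ast$ and of the single element $(\mathrm dz^\ast)_{\mathfrak g}$ sitting inside it. In both families the even generators $x_i^\ast$ anticommute and square to zero, while the odd generators $y_j^\ast$ commute and are of infinite order; hence $\bigwedge^\bullet(\mathfrak g/\F z)^\ast$ is, up to the Koszul signs, the tensor product of the exterior algebra on the $x_i^\ast$ with the polynomial algebra on the $y_j^\ast$, and $\psi_{\mathfrak g}^k$ is left multiplication by $(\mathrm dz^\ast)_{\mathfrak g}$. I would then treat $\mathfrak h_{2m,n}$ and $\mathfrak{ba}_n$ separately, since $(\mathrm dz^\ast)_{\mathfrak g}$ behaves very differently in the two cases.

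For $\mathfrak g=\mathfrak h_{2m,n}$ (so $n\ge1$) one writes $(\mathrm dz^\ast)_{\mathfrak g}=\omega_x+\omega_y$ with $\omega_x=-\sum_{i=1}^m x_i^\ast\wedge x_{m+i}^\ast$ lying in the exterior factor and $\omega_y=\frac12\sum_{j=1}^n (y_j^\ast)^2$ a nonzero, central element of the polynomial subalgebra $\F[y^\ast]$ generated by $y_1^\ast,\dots,y_n^\ast$. Given a nonzero $\alpha\in\mathrm{Ker}\,\psi_{\mathfrak g}^k$, decompose $\alpha=\sum_p\alpha_p$ according to the exterior degree in the $x_i^\ast$, and let $p_0$ be minimal with $\alpha_{p_0}\ne0$. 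Since multiplication by $\omega_x$ raises this degree by $2$ while multiplication by $\omega_y$ preserves it, the component of $\alpha\wedge(\mathrm dz^\ast)_{\mathfrak g}$ of exterior degree $p_0$ is precisely $\alpha_{p_0}\cdot\omega_y$, which must therefore vanish. But $\bigwedge^{p_0}\mathrm{span}\{x_i^\ast\}\otimes\F[y^\ast]$ is a free module over the integral domain $\F[y^\ast]$ and $\omega_y\ne0$, so $\alpha_{p_0}=0$, a contradiction. Hence $\mathrm{Ker}\,\psi_{\mathfrak h_{2m,n}}^k=0$, and the dimension assertion is immediate.

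For $\mathfrak g=\mathfrak{ba}_n$ the element $(\mathrm dz^\ast)_{\mathfrak g}=-\sum_{i=1}^n x_i^\ast\wedge y_i^\ast$ is $\Z_2$-odd of $\Z$-degree $2$; each summand $x_i^\ast y_i^\ast$ squares to zero and distinct summands anticommute (a short computation in $\bigwedge^\bullet(\mathfrak{ba}_n/\F z)^\ast$), so $((\mathrm dz^\ast)_{\mathfrak g})^2=0$. Thus $(\bigwedge^\bullet(\mathfrak{ba}_n/\F z)^\ast,\psi_{\mathfrak g})$ is a cochain complex and $\mathrm{Ker}\,\psi_{\mathfrak g}^k=\mathrm{Im}\,\psi_{\mathfrak g}^{k-2}\oplus H^k$, where $H^k$ is its $k$-th cohomology. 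The central step is to identify this complex with the tensor product (with Koszul signs) over $i=1,\dots,n$ of the one-variable complexes $C^{(i)}$, each equal to $\Lambda(x_i^\ast)\otimes\F[y_i^\ast]$ with differential left multiplication by $x_i^\ast y_i^\ast$, and to check directly that on $C^{(i)}$ the kernel of the differential is $x_i^\ast\F[y_i^\ast]$ and its image is $x_i^\ast y_i^\ast\F[y_i^\ast]$, so that $H(C^{(i)})\cong\F$, spanned by the class of $x_i^\ast$ and concentrated in $\Z$-degree $1$. The Künneth formula then gives $H^k\cong\delta_{k,n}\,\F$, with the surviving class represented by $x_1^\ast\wedge\cdots\wedge x_n^\ast$ itself. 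Finally, every element of $\mathrm{Im}\,\psi_{\mathfrak g}^{k-2}=\bigwedge^{k-2}(\mathfrak{ba}_n/\F z)^\ast\wedge\F(\mathrm dz^\ast)_{\mathfrak g}$ has strictly positive $y$-degree in each of its monomials, while $x_1^\ast\wedge\cdots\wedge x_n^\ast$ has $y$-degree $0$; hence the sum $\mathrm{Ker}\,\psi_{\mathfrak g}^k=\mathrm{Im}\,\psi_{\mathfrak g}^{k-2}\oplus\delta_{k,n}\F(x_1^\ast\wedge\cdots\wedge x_n^\ast)$ is direct, which is the first displayed formula. For the dimensions, $\dim\mathrm{Ker}\,\psi_{\mathfrak g}^k=\dim\mathrm{Im}\,\psi_{\mathfrak g}^{k-2}+\delta_{k,n}=\mathfrak d^{k-2}(n,n)-\dim\mathrm{Ker}\,\psi_{\mathfrak g}^{k-2}+\delta_{k,n}$, using $\dim\bigwedge^{k-2}(\mathfrak{ba}_n/\F z)^\ast=\mathfrak d^{k-2}(n,n)$; solving this recursion with $\dim\mathrm{Ker}\,\psi_{\mathfrak g}^0=0$ gives the stated alternating sum.

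I expect the main obstacle to be the $\mathfrak{ba}_n$ cohomology computation: one has to set up the tensor-product factorization so that the Koszul signs are handled correctly (so that the Künneth formula genuinely applies), verify that the unique surviving cohomology class is represented exactly by $x_1^\ast\wedge\cdots\wedge x_n^\ast$ and not merely by something cohomologous to it, and then run the resulting recursion through to the stated closed form. This is exactly what is packaged in \cite[Lemma 4.3]{9} together with the computations inside the proofs of \cite[Theorems 4.1 and 4.2]{9}, against which the final expressions should be checked.
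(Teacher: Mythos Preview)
Your argument is correct and in fact more detailed than the paper's own treatment: the paper does not prove this lemma at all but simply records it as a consequence of \cite[Lemma~4.3]{9} together with the proofs of \cite[Theorems~4.1 and~4.2]{9}. Your sketch is a natural reconstruction of what those results contain --- the regular-element/filtration argument for $\mathfrak h_{2m,n}$ and the Koszul/K\"unneth factorisation for $\mathfrak{ba}_n$ --- and you yourself close by pointing to the same reference for verification, so the approaches coincide.

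Two small remarks. First, your restriction ``so $n\ge1$'' in the $\mathfrak h_{2m,n}$ case is not just a convenience: for $n=0$ the map $\psi_{\mathfrak h_{2m,0}}^{k}$ is wedging by the symplectic form on a $2m$-dimensional space and has nonzero kernel once $k\ge m$, so the lemma as stated tacitly assumes the odd part is present; it is worth making that explicit. Second, writing $\mathrm{Ker}\,\psi_{\mathfrak g}^{k}=\mathrm{Im}\,\psi_{\mathfrak g}^{k-2}\oplus H^{k}$ is a slight abuse, since $H^{k}$ is a quotient; what you actually use (and prove) is that $x_{1}^{\ast}\wedge\cdots\wedge x_{n}^{\ast}$ lies in the kernel, represents the unique nontrivial cohomology class, and is linearly independent from the image because it has $y$-degree~$0$. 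With those clarifications your recursion $\dim\mathrm{Ker}\,\psi_{\mathfrak g}^{k}=\mathfrak d^{k-2}(n,n)-\dim\mathrm{Ker}\,\psi_{\mathfrak g}^{k-2}+\delta_{k,n}$ unwinds to the stated alternating sum.
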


\begin{thm}
The cup product on the adjoint cohomology for Heisenberg Lie superalgebras is trivial.
\begin{proof}
From Theorem \ref{main} and Lemma \ref{bai}, it is obvious that the theorem holds for $\mathfrak{h}_{2m,n}$.
Suppose that $k\geq 1$ and $\mathfrak{g}=\mathfrak{ba}_{n}$. For $1\leq i\leq k$, set
$$
\mathcal{W}_{\mathfrak{g}}^{k,i}=\mathcal{V}_{\mathfrak{g}}\bigotimes\left(\F z^{\ast^{i}}\bigwedge \mathrm{Ker}\ \psi_{\mathfrak{g}}^{k-i}\right).$$
By a direct computation from Eqs. (\ref{z}) and (\ref{22}), $\mathrm{H}^{k}(\mathfrak{g},\mathfrak{g})$ is contained in
$$\left\{\mathrm{Ker}\ \mathrm{d}\bigcap\left(\mathcal{W}_{\mathfrak{g}}^{k}\bigoplus\mathcal{\overline{W}}_{\mathfrak{g}}^{k,1}\right)\right\}
\bigoplus_{i=2}^{k}\left\{\mathrm{Ker}\ \mathrm{d}\bigcap\left(\mathcal{W}_{\mathfrak{g}}^{k,i-1}\bigoplus\mathcal{\overline{W}}_{\mathfrak{g}}^{k,i}\right)\right\}.$$
Note that
\begin{equation*}
C(\mathfrak{g})=[\mathfrak{g},\mathfrak{g}]=\F z,\quad
\mathrm{d}\left(\mathcal{V}_{\mathfrak{g}}\bigotimes\bigwedge^{k-1}\mathcal{V}_{\mathfrak{g}}^{\ast}\right)=\F z\bigotimes\bigwedge^{k}\mathcal{V}_{\mathfrak{g}}^{\ast},  \quad
 \mathrm{Ker}\ \psi_{\mathfrak{g}}^{k}\bigwedge\mathrm{Ker}\ \psi_{\mathfrak{g}}^{l}=0,
\end{equation*}
where $k,l\geq 1$.
Thus,
from Theorem \ref{cup},
$$\mathcal{W}_{\mathfrak{g}}^{k}\bigcup \mathcal{W}_{\mathfrak{g}}^{l}=\mathcal{W}_{\mathfrak{g}}^{k,i}\bigcup\mathcal{W}_{\mathfrak{g}}^{l,j}
=\mathcal{\overline{W}}_{\mathfrak{g}}^{k,i}\bigcup\mathrm{H}^{\bullet}(\mathfrak{g},\mathfrak{g})=0,$$
for any $k,l\geq 1$, $1\leq i\leq k$, and $1\leq j\leq l$.
It is sufficient to prove that
$$\mathcal{W}_{\mathfrak{g}}^{k}\bigcup\mathcal{W}_{\mathfrak{g}}^{l,j}=0.$$
From Lemma \ref{bai} and  Theorem \ref{cup}, we have
\begin{align*}
\mathcal{W}_{\mathfrak{g}}^{k}\bigcup\mathcal{W}_{\mathfrak{g}}^{l,j}&\subseteq \F z\bigotimes\F z^{\ast^{j}}\left(\bigwedge^{k}\mathcal{V}_{\mathfrak{g}}^{\ast}\bigwedge\mathrm{Ker}\ \psi_{\mathfrak{g}}^{l-j}\right)\\
& \subseteq \F z\bigotimes\F \left((\mathrm{d}z^{\ast})_{\mathfrak{g}}\wedge z^{\ast^{j}}\right)\bigwedge^{k+l-j-2}\mathcal{V}_{\mathfrak{g}}^{\ast}\\
& \subseteq \mathrm{d}\left(\F z\bigotimes\F z^{\ast^{j+1}}\bigwedge^{k+l-j-2}\mathcal{V}_{\mathfrak{g}}^{\ast}\right).
\end{align*}
The proof is complete.
\end{proof}
\end{thm}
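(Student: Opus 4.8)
The plan is to split the argument according to the two isomorphism classes of finite-dimensional Heisenberg Lie superalgebras, $\mathfrak{h}_{2m,n}$ (even center) and $\mathfrak{ba}_{n}$ (odd center). For $\mathfrak{h}_{2m,n}$ I would invoke the criterion of Theorem \ref{main} directly, taking $\mathcal{V}_{\mathfrak{g}}$ to be the span of the non-central generators and using $[\mathfrak{g},\mathfrak{g}]=C(\mathfrak{g})=\F z$. For $\mathfrak{ba}_{n}$ the second hypothesis of Theorem \ref{main} fails, since $\mathrm{Ker}\,\psi_{\mathfrak{g}}^{k}\neq 0$ in general by Lemma \ref{bai}, so I would instead run an explicit cochain-level computation organized by the power of the odd central generator $z^{\ast}$.

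For $\mathfrak{g}=\mathfrak{h}_{2m,n}$, condition (2) of Theorem \ref{main} is essentially Lemma \ref{bai}: for $f=c z^{\ast}\in[\mathfrak{g},\mathfrak{g}]^{\ast}$ and $g\in\bigwedge^{k}\mathcal{V}_{\mathfrak{g}}^{\ast}$, identifying $\mathcal{V}_{\mathfrak{g}}^{\ast}$ with $(\mathfrak{g}/\F z)^{\ast}$ one has $\mathrm{d}(f)\wedge g=\pm c\,\psi_{\mathfrak{g}}^{k}(g)$, and since $\mathrm{Ker}\,\psi_{\mathfrak{h}_{2m,n}}^{k}=0$ this vanishes only when $c=0$ or $g=0$. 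For condition (1), I would note that $v\mapsto\mathrm{d}(v)$ is a linear isomorphism $\mathcal{V}_{\mathfrak{g}}\cong\F z\otimes\mathcal{V}_{\mathfrak{g}}^{\ast}$ (this is exactly the nondegeneracy of the Heisenberg bracket) and that $\mathrm{d}(v\otimes w)=\mathrm{d}(v)\wedge w$ for $w\in\bigwedge^{k-1}\mathcal{V}_{\mathfrak{g}}^{\ast}$, because $\mathrm{d}$ annihilates $\mathcal{V}_{\mathfrak{g}}^{\ast}$; hence $\mathrm{d}(\mathcal{V}_{\mathfrak{g}}\otimes\bigwedge^{k-1}\mathcal{V}_{\mathfrak{g}}^{\ast})$ is the image of the wedge map $\mathcal{V}_{\mathfrak{g}}^{\ast}\otimes\bigwedge^{k-1}\mathcal{V}_{\mathfrak{g}}^{\ast}\to\bigwedge^{k}\mathcal{V}_{\mathfrak{g}}^{\ast}$, which is onto for $k\geq 1$, so $[\mathfrak{g},\mathfrak{g}]\otimes\bigwedge^{k}\mathcal{V}_{\mathfrak{g}}^{\ast}\subseteq\mathrm{Im}\,\mathrm{d}$. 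Theorem \ref{main} then gives triviality of the cup product for $\mathfrak{h}_{2m,n}$.

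For $\mathfrak{g}=\mathfrak{ba}_{n}$, since $|z|=\bar 1$ the powers $z^{\ast^{i}}$ are nonzero for every $i\geq 0$, so I would grade $\mathfrak{g}\otimes\bigwedge^{k}\mathfrak{g}^{\ast}$ by the $z^{\ast}$-degree, obtaining the pieces $\mathcal{W}_{\mathfrak{g}}^{k}=\mathcal{V}_{\mathfrak{g}}\otimes\bigwedge^{k}\mathcal{V}_{\mathfrak{g}}^{\ast}$, the spaces $\mathcal{V}_{\mathfrak{g}}\otimes(\F z^{\ast^{i}}\wedge\bigwedge^{k-i}\mathcal{V}_{\mathfrak{g}}^{\ast})$ and $\overline{\mathcal{W}}_{\mathfrak{g}}^{k,i}=\F z\otimes(\F z^{\ast^{i}}\wedge\bigwedge^{k-i}\mathcal{V}_{\mathfrak{g}}^{\ast})$. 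Using Eqs. (\ref{z}) and (\ref{22}) together with Lemma \ref{bai}, I would check that a cocycle of $z^{\ast}$-degree $i\geq 1$ has its $\mathcal{V}_{\mathfrak{g}}$-summand forced into $\mathcal{W}_{\mathfrak{g}}^{k,i}:=\mathcal{V}_{\mathfrak{g}}\otimes(\F z^{\ast^{i}}\wedge\mathrm{Ker}\,\psi_{\mathfrak{g}}^{k-i})$, so that $\mathrm{H}^{k}(\mathfrak{g},\mathfrak{g})$ lies in $\{\mathrm{Ker}\,\mathrm{d}\cap(\mathcal{W}_{\mathfrak{g}}^{k}\oplus\overline{\mathcal{W}}_{\mathfrak{g}}^{k,1})\}\oplus\bigoplus_{i\geq 2}\{\mathrm{Ker}\,\mathrm{d}\cap(\mathcal{W}_{\mathfrak{g}}^{k,i-1}\oplus\overline{\mathcal{W}}_{\mathfrak{g}}^{k,i})\}$. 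Then I would compute all cup products of these pieces via Theorem \ref{cup}, using: $[\mathcal{V}_{\mathfrak{g}},\mathcal{V}_{\mathfrak{g}}]\subseteq\F z$ and $[z,\cdot]=0$; $\mathrm{d}(\mathcal{V}_{\mathfrak{g}}\otimes\bigwedge^{k-1}\mathcal{V}_{\mathfrak{g}}^{\ast})=\F z\otimes\bigwedge^{k}\mathcal{V}_{\mathfrak{g}}^{\ast}$ as above; and the key identity $\mathrm{Ker}\,\psi_{\mathfrak{g}}^{k}\wedge\mathrm{Ker}\,\psi_{\mathfrak{g}}^{l}=0$, which I would read off the explicit kernels of Lemma \ref{bai} after noting $(\mathrm{d}z^{\ast})_{\mathfrak{g}}\wedge(\mathrm{d}z^{\ast})_{\mathfrak{g}}=0$ (the $i\neq j$ cross-terms cancel by graded-supercommutativity) and $(\mathrm{d}z^{\ast})_{\mathfrak{g}}\wedge(x_{1}^{\ast}\wedge\cdots\wedge x_{n}^{\ast})=0=(x_{1}^{\ast}\wedge\cdots\wedge x_{n}^{\ast})\wedge(x_{1}^{\ast}\wedge\cdots\wedge x_{n}^{\ast})$. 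These kill $\mathcal{W}_{\mathfrak{g}}^{k}\cup\mathcal{W}_{\mathfrak{g}}^{l}$ (it lands in $\F z\otimes\bigwedge^{k+l}\mathcal{V}_{\mathfrak{g}}^{\ast}=\mathrm{Im}\,\mathrm{d}$), $\mathcal{W}_{\mathfrak{g}}^{k,i}\cup\mathcal{W}_{\mathfrak{g}}^{l,j}$ (a wedge of two $\psi$-kernels), and $\overline{\mathcal{W}}_{\mathfrak{g}}^{k,i}\cup\mathrm{H}^{\bullet}(\mathfrak{g},\mathfrak{g})$ (as $z$ is central). The one surviving family, $\mathcal{W}_{\mathfrak{g}}^{k}\cup\mathcal{W}_{\mathfrak{g}}^{l,j}$, lands in $\F z\otimes((\mathrm{d}z^{\ast})_{\mathfrak{g}}\wedge z^{\ast^{j}}\wedge\bigwedge^{\bullet}\mathcal{V}_{\mathfrak{g}}^{\ast})$; since $z^{\ast^{j}}\wedge(\mathrm{d}z^{\ast})_{\mathfrak{g}}=\pm\frac{1}{j+1}\mathrm{d}(z^{\ast^{j+1}})$, $z$ is central, and $\mathrm{d}$ annihilates $\mathcal{V}_{\mathfrak{g}}^{\ast}$, this equals $\mathrm{d}$ of an explicit cochain in $\F z\otimes(z^{\ast^{j+1}}\wedge\bigwedge^{\bullet}\mathcal{V}_{\mathfrak{g}}^{\ast})$ and hence vanishes in cohomology.

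I expect the main obstacle to be the $\mathfrak{ba}_{n}$ case, and within it the bookkeeping that confines $\mathrm{H}^{k}(\mathfrak{ba}_{n},\mathfrak{ba}_{n})$ to the stated direct sum of $z^{\ast}$-graded pieces: this needs a careful reading of $\mathrm{d}$ on $\mathcal{V}_{\mathfrak{g}}\otimes(\F z^{\ast^{i}}\wedge\bigwedge^{k-i}\mathcal{V}_{\mathfrak{g}}^{\ast})$ and an appeal to Lemma \ref{bai} to recognize the $\psi$-kernels. A secondary technical point is the exceptional summand $\delta_{k,n}\F(x_{1}^{\ast}\wedge\cdots\wedge x_{n}^{\ast})$ of $\mathrm{Ker}\,\psi_{\mathfrak{g}}^{k}$ in the analysis of $\mathcal{W}_{\mathfrak{g}}^{k}\cup\mathcal{W}_{\mathfrak{g}}^{l,j}$: here one uses that a cocycle in $\mathcal{W}_{\mathfrak{g}}^{l,j}$ carrying the factor $x_{1}^{\ast}\wedge\cdots\wedge x_{n}^{\ast}$ must have its $\mathcal{V}_{\mathfrak{g}}$-component in $\mathrm{span}_{\F}\{y_{1},\ldots,y_{n}\}$, after which the product is once more an explicit coboundary $\mathrm{d}(x_{r}\otimes(\cdots\wedge z^{\ast^{j}}\wedge x_{1}^{\ast}\wedge\cdots\wedge x_{n}^{\ast}))$.
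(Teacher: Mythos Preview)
Your proposal is correct and follows essentially the same route as the paper: invoke Theorem~\ref{main} together with Lemma~\ref{bai} for $\mathfrak{h}_{2m,n}$, and for $\mathfrak{ba}_{n}$ decompose cocycle representatives by $z^{\ast}$-degree into the pieces $\mathcal{W}_{\mathfrak{g}}^{k}$, $\mathcal{W}_{\mathfrak{g}}^{k,i}$, $\overline{\mathcal{W}}_{\mathfrak{g}}^{k,i}$, then kill the various cup products using Theorem~\ref{cup}, the surjectivity $\mathrm{d}\bigl(\mathcal{V}_{\mathfrak{g}}\otimes\bigwedge^{k-1}\mathcal{V}_{\mathfrak{g}}^{\ast}\bigr)=\F z\otimes\bigwedge^{k}\mathcal{V}_{\mathfrak{g}}^{\ast}$, and $\mathrm{Ker}\,\psi_{\mathfrak{g}}^{k}\wedge\mathrm{Ker}\,\psi_{\mathfrak{g}}^{l}=0$; the only residual family $\mathcal{W}_{\mathfrak{g}}^{k}\cup\mathcal{W}_{\mathfrak{g}}^{l,j}$ is then shown to be a coboundary. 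Your extra justifications (verifying condition~(1) of Theorem~\ref{main}, checking $(\mathrm{d}z^{\ast})_{\mathfrak{g}}^{2}=0$) are welcome and the paper leaves them implicit.

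One small correction on the ``secondary technical point'': your intermediate claim that a cocycle in $\mathcal{W}_{\mathfrak{g}}^{l,j}$ carrying the factor $x_{1}^{\ast}\wedge\cdots\wedge x_{n}^{\ast}$ must have $\mathcal{V}_{\mathfrak{g}}$-component in $\mathrm{span}_{\F}\{y_{1},\dots,y_{n}\}$ is not true in general, since an $x_{r}$-component can be compensated by a suitable term in $\overline{\mathcal{W}}_{\mathfrak{g}}^{l,j+1}$. The paper avoids this detour: it simply absorbs the exceptional summand $\delta_{l-j,n}\F(x_{1}^{\ast}\wedge\cdots\wedge x_{n}^{\ast})$ into the same chain of inclusions, using that for $k\geq 1$ one has $\bigwedge^{k}\mathcal{V}_{\mathfrak{g}}^{\ast}\wedge(x_{1}^{\ast}\wedge\cdots\wedge x_{n}^{\ast})\subseteq(\mathrm{d}z^{\ast})_{\mathfrak{g}}\wedge\bigwedge^{k+n-2}\mathcal{V}_{\mathfrak{g}}^{\ast}$ (indeed, only $y^{\ast}$-factors survive the wedge, and $x_{1}^{\ast}\wedge\cdots\wedge x_{n}^{\ast}\wedge y_{j_{1}}^{\ast}\wedge\cdots\wedge y_{j_{k}}^{\ast}=\pm\psi_{\mathfrak{g}}(x_{1}^{\ast}\wedge\cdots\wedge\widehat{x_{j_{1}}^{\ast}}\wedge\cdots\wedge x_{n}^{\ast}\wedge y_{j_{2}}^{\ast}\wedge\cdots\wedge y_{j_{k}}^{\ast})$). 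With that adjustment your argument goes through exactly as the paper's does.
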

\begin{lem}\label{q}
Suppose that $\mathfrak{g}=\F z \bigoplus \mathcal{V}_{\mathfrak{g}}$ is a Heisenberg Lie superalgebra and the super-dimension of $\mathcal{V}_{\mathfrak{g}}$ is $(r,s)$.
Then
$$\mathrm{dim}\ \mathrm{H}^{k}(\mathfrak{g}/\F z,\mathfrak{g})=
                        (r+s)\mathfrak{d}^{k}(r,s)-\mathfrak{d}^{k+1}(r,s),
$$
for $k\geq 1$.
\end{lem}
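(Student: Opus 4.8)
The plan is to compute $\mathrm{H}^k(\mathfrak{g}/\F z,\mathfrak{g})$ directly, exploiting that $\mathfrak{g}/\F z$ is \emph{abelian}. For an abelian Lie superalgebra $\mathfrak{a}$ acting on a module $M$, the cochain complex $M\bigotimes\bigwedge^\bullet\mathfrak{a}^\ast$ has differential coming only from the action of $\mathfrak{a}$ on $M$. Here $\mathfrak{a}=\mathfrak{g}/\F z$ and $M=\mathfrak{g}$ as a $\mathfrak{g}/\F z$-module; since $[\mathfrak{g},\mathfrak{g}]=\F z$ acts trivially and $\mathcal{V}_{\mathfrak{g}}$ maps onto $\mathfrak{g}/\F z$, the action of $\mathfrak{g}/\F z$ on $\mathfrak{g}$ sends $\mathcal{V}_{\mathfrak{g}}\mapsto \F z$ and kills $\F z$. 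So the complex splits: the $\F z$-part contributes the full $\bigwedge^\bullet(\mathfrak{g}/\F z)^\ast$ with zero differential, and the $\mathcal{V}_{\mathfrak{g}}$-part carries all the nontrivial differential.

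First I would write, for $k\ge 0$,
\begin{align*}
\mathfrak{g}\bigotimes\bigwedge^{k}(\mathfrak{g}/\F z)^{\ast}
=\Bigl(\F z\bigotimes\bigwedge^{k}(\mathfrak{g}/\F z)^{\ast}\Bigr)
\bigoplus\Bigl(\mathcal{V}_{\mathfrak{g}}\bigotimes\bigwedge^{k}(\mathfrak{g}/\F z)^{\ast}\Bigr),
\end{align*}
and observe from Eq.~(\ref{22}) that $\mathrm{d}$ annihilates the first summand while, on the second, $\mathrm{d}$ applied to $v\otimes w$ (for $v\in\mathcal{V}_{\mathfrak{g}}$) produces $z\otimes\bigl(w\wedge\pm\,(\text{something in }\bigwedge^{2}(\mathfrak{g}/\F z)^\ast)\bigr)$; up to sign this is exactly $z\otimes\psi^{k}_{\mathfrak{g}}(\text{image of }w)$-type behaviour. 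More precisely, identifying $\mathcal{V}_{\mathfrak{g}}^{\ast}$ with $(\mathfrak{g}/\F z)^{\ast}$, the map $\mathrm{d}\colon \mathcal{V}_{\mathfrak{g}}\bigotimes\bigwedge^{k}(\mathfrak{g}/\F z)^{\ast}\to \F z\bigotimes\bigwedge^{k+1}(\mathfrak{g}/\F z)^{\ast}$ is, on each basis vector $v_a\otimes w$, given by wedging $w$ with the fixed $1$-form $v_a^{\ast}$ dual to $v_a$ (times a sign), so this differential is surjective with kernel of dimension $(r+s)\mathfrak{d}^{k}(r,s)-\mathfrak{d}^{k+1}(r,s)$ by the rank–nullity count, since $\dim\bigl(\F z\otimes\bigwedge^{k+1}(\mathfrak{g}/\F z)^{\ast}\bigr)=\mathfrak{d}^{k+1}(r,s)$ and the map hits all of it for $k\ge 0$ (this uses that contraction-type / multiplication-by-generators maps on the super-exterior algebra are surjective onto the next degree, essentially a Koszul-complex fact).

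Then I would assemble the cohomology: in degree $k\ge 1$ every cocycle in the $\F z$-summand is automatically a coboundary coming from $\mathcal{V}_{\mathfrak{g}}\bigotimes\bigwedge^{k-1}(\mathfrak{g}/\F z)^{\ast}$ (again by surjectivity of the previous differential), so $\mathrm{H}^{k}(\mathfrak{g}/\F z,\mathfrak{g})$ equals the kernel of $\mathrm{d}$ restricted to $\mathcal{V}_{\mathfrak{g}}\bigotimes\bigwedge^{k}(\mathfrak{g}/\F z)^{\ast}$, whose dimension is the advertised $(r+s)\mathfrak{d}^{k}(r,s)-\mathfrak{d}^{k+1}(r,s)$. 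I expect the main obstacle to be the bookkeeping of signs and the precise verification that the differential on the $\mathcal{V}_{\mathfrak{g}}$-part is, after the identification $\mathcal{V}_{\mathfrak{g}}^{\ast}\cong(\mathfrak{g}/\F z)^{\ast}$, genuinely the left-multiplication-by-$\sum v_a^\ast\otimes(\text{dual slot})$ operator and hence surjective in each positive degree; this is where one must be careful, because the super-exterior algebra on $r$ even and $s$ odd generators has the $y_j^{\ast 2}$ terms, and surjectivity of the relevant multiplication map uses that $\mathfrak{d}^{k}(r,s)$ is non-decreasing until the even generators are exhausted together with an explicit splitting. Once surjectivity is in hand, the dimension formula is immediate from additivity of dimension along the short pieces of the complex, and the restriction to $k\ge 1$ is exactly what makes the $\F z$-summand cohomologically invisible.
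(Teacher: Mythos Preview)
Your proposal is correct and follows essentially the same route as the paper: split $\mathfrak{g}\otimes\bigwedge^{k}(\mathfrak{g}/\F z)^{\ast}$ along $\mathfrak{g}=\F z\oplus\mathcal{V}_{\mathfrak{g}}$, note that $\mathrm{d}$ kills the $\F z$-summand while sending $\mathcal{V}_{\mathfrak{g}}\otimes\bigwedge^{k}$ \emph{onto} $\F z\otimes\bigwedge^{k+1}$, and apply rank--nullity to the restricted map. One small correction worth recording: $\mathrm{d}(v_{a})$ is $\pm z\otimes(\text{Heisenberg partner of }v_{a})^{\ast}$ rather than $\pm z\otimes v_{a}^{\ast}$ in general (e.g.\ $\mathrm{d}(x_{i})=-z\otimes x_{m+i}^{\ast}$ in $\mathfrak{h}_{2m,n}$, and $\mathrm{d}(x_{i})=z\otimes y_{i}^{\ast}$ in $\mathfrak{ba}_{n}$), but since those partners still run over a basis of $(\mathfrak{g}/\F z)^{\ast}$ the surjectivity claim is unaffected---indeed surjectivity is immediate because every degree-$(k{+}1)$ monomial in the super-exterior algebra factors as a degree-$1$ generator wedge a degree-$k$ element, so your concern about monotonicity of $\mathfrak{d}^{k}(r,s)$ is unnecessary.
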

\begin{proof}
By a direct computation, we have
\begin{equation*}
\mathrm{d}\left(\mathfrak{g} \bigotimes\bigwedge^{k-1}(\mathfrak{g}/ \F z)^{\ast}\right)=\mathrm{d}\left(\mathcal{V}_{\mathfrak{g}} \bigotimes\bigwedge^{k-1}(\mathfrak{g}/ \F z)^{\ast}\right)=\F z\bigotimes\bigwedge^{k}\left(\mathfrak{g}/ \F z\right)^{\ast},
\end{equation*}
and
\begin{equation*}
\mathrm{H}^{k}(\mathfrak{g}/\F z,\mathfrak{g})=\mathrm{Ker}\ \mathrm{d}\bigcap \left(\mathcal{V}_{\mathfrak{g}}\bigotimes\bigwedge^{k} (\mathfrak{g}/\F z)^{\ast}\right),
\end{equation*}
where $k\geq 1$. It is sufficient to consider the dimension of
$$\mathrm{Ker}\ \mathrm{d}\bigcap \left(\mathcal{V}_{\mathfrak{g}}\bigotimes\bigwedge^{k} (\mathfrak{g}/\F z)^{\ast}\right),\quad k\geq 1.$$
For $k\geq 1$, define the mapping
$$\overline{\mathrm{d}}_{k}: \mathcal{V}_{\mathfrak{g}}\bigotimes\bigwedge^{k} (\mathfrak{g}/\F z)^{\ast}\longrightarrow \F z \bigotimes \bigwedge^{k+1}(\mathfrak{g}/\F z)^{\ast}
$$
such that $\overline{\mathrm{d}}_{k}(x)=\mathrm{d}(x)$, $x\in\mathcal{V}_{\mathfrak{g}}\bigotimes\bigwedge^{k} (\mathfrak{g}/\F z)^{\ast}$.
Then $\overline{\mathrm{d}}_{k}$ is surjective. Moreover, we have
\begin{align*}
\mathrm{dim}\ \mathrm{Ker}\ \mathrm{d}\bigcap \left(\mathcal{V}_{\mathfrak{g}}\bigotimes\bigwedge^{k} (\mathfrak{g}/\F z)^{\ast}\right)
=&\mathrm{dim}\ \mathrm{Ker}\ \overline{\mathrm{d}}_{k}\\
=&\mathrm{dim}\ \mathcal{V}_{\mathfrak{g}}\bigotimes\bigwedge^{k} (\mathfrak{g}/\F z)^{\ast}-\mathrm{dim}\ \F z\bigotimes\bigwedge^{k+1}(\mathfrak{g}/\F z)^{\ast}\\
=&(r+s)\mathfrak{d}^{k}(r,s)-\mathfrak{d}^{k+1}(r,s),
\end{align*}
the proof is complete.
\end{proof}

For $k,n\in \mathbb{Z}$, set
$$\mathfrak{a}_{n}^{k}=\mathrm{dim}\ \mathrm{Ker}\ \psi_{\mathfrak{ba}_{n}}^{k}$$ and
$$\mathfrak{b}_{n}^{k}=\mathfrak{d}^{k}(n,n)-\mathfrak{a}_{n}^{k}.$$
\begin{thm}\label{even}
Suppose that $k\geq 1$.
Then
\begin{align*}
(1)&\ \mathrm{dim}\ \mathrm{H}^{k}(\mathfrak{h}_{2m,n},\mathfrak{h}_{2m,n})=
                        (2m+n)(\mathfrak{d}^{k}(2m,n)-\mathfrak{d}^{k-2}(2m,n))-\mathfrak{d}^{k+1}(2m,n)\\
&\qquad\qquad\qquad\qquad\quad\quad\quad +\mathfrak{d}^{k-1}(2m,n).\\
(2)&\ \mathrm{dim}\ \mathrm{H}^{k}(\mathfrak{ba}_{n},\mathfrak{ba}_{n})
=2n\mathfrak{d}^{k}(n,n)-\mathfrak{d}^{k+1}(n,n)+1+\sum_{i=1}^{k-1}(2n\mathfrak{a}_{n}^{k-i}-\mathfrak{b}_{n}^{k-i-1}) \\
&\qquad\qquad\qquad\qquad\quad+\sum_{i=0}^{k-3}
(\mathfrak{d}^{k-i-1}(n,n)-\mathfrak{b}_{n}^{k-i-3}-2n\mathfrak{b}_{n}^{k-i-2}).
\end{align*}
\end{thm}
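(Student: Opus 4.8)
The plan is to compute both Betti numbers by running the Hochschild-Serre spectral sequence of Eq.~(\ref{26}) relative to the center $C(\mathfrak{g})=\F z$, so that $\dim\mathrm{H}^{k}(\mathfrak{g},\mathfrak{g})=\sum_{s\geq0}\dim\mathrm{E}_{\infty}^{k-s,s}$. The two families behave differently only because of the parity of $z$: in $\mathfrak{h}_{2m,n}$ the element $z$ is even, so $\bigwedge^{s}(\F z)^{\ast}=0$ for $s\geq2$ and the page has only the two rows $s=0,1$; in $\mathfrak{ba}_{n}$ the element $z$ is odd, so $\bigwedge^{s}(\F z)^{\ast}=\F z^{\ast^{s}}$ is one-dimensional for all $s\geq0$ and infinitely many rows occur. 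In both cases $\mathrm{E}_{2}^{k,s}=\mathrm{H}^{k}(\mathfrak{g}/\F z,\mathfrak{g})\otimes\F z^{\ast^{s}}$, whose dimension for $k\geq1$ is supplied by Lemma~\ref{q} (with super-dimension $(2m,n)$, resp.\ $(n,n)$), while $\mathrm{H}^{0}(\mathfrak{g}/\F z,\mathfrak{g})=\F z$; and $\mathrm{d}_{2}^{k,s}$ is, up to sign and the factor $z^{\ast^{s-1}}$, the map that wedges the form part of a cocycle with $(\mathrm{d}z^{\ast})_{\mathfrak{g}}$, i.e.\ the operator $\psi_{\mathfrak{g}}$ of Lemma~\ref{bai} restricted to cocycle spaces.

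For part (1), $\mathfrak{g}=\mathfrak{h}_{2m,n}$: by Lemma~\ref{bai}, $\mathrm{Ker}\,\psi_{\mathfrak{h}_{2m,n}}^{k}=0$ for every $k$, which yields conditions (1) and (2) of Theorem~\ref{main1} (condition (1) being the surjectivity of $\overline{\mathrm{d}}_{k}$ established in the proof of Lemma~\ref{q}, and condition (2) injectivity of $\psi_{\mathfrak{h}_{2m,n}}^{k}$); hence $\mathrm{d}_{2}^{k,1}$ is injective for $k\geq1$. Since only the rows $s=0,1$ are present, $\mathrm{E}_{3}=\mathrm{E}_{\infty}$, so $\mathrm{E}_{\infty}^{k,1}=0$ for $k\geq1$ and $\mathrm{E}_{\infty}^{k,0}=\mathrm{E}_{2}^{k,0}/\mathrm{Im}\,\mathrm{d}_{2}^{k-2,1}$, whence for $k\geq3$
\begin{equation*}
\dim\mathrm{H}^{k}(\mathfrak{h}_{2m,n},\mathfrak{h}_{2m,n})=\dim\mathrm{H}^{k}(\mathfrak{h}_{2m,n}/\F z,\mathfrak{h}_{2m,n})-\dim\mathrm{H}^{k-2}(\mathfrak{h}_{2m,n}/\F z,\mathfrak{h}_{2m,n}).
\end{equation*}
Substituting Lemma~\ref{q} gives the stated expression; a short check at $k=1,2$ (using $\mathfrak{d}^{j}(2m,n)=0$ for $j<0$ and the fact that $\mathrm{d}_{2}^{0,1}=0$ because $z\otimes(\mathrm{d}z^{\ast})_{\mathfrak{h}}$ is a coboundary in the $\mathfrak{h}_{2m,n}/\F z$-complex) shows the formula holds there too.

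For part (2), $\mathfrak{g}=\mathfrak{ba}_{n}$: the $\mathrm{d}_{2}$-differentials assemble into complexes along the anti-diagonals $k+2s=\mathrm{const}$,
\begin{equation*}
\cdots\longrightarrow\mathrm{E}_{2}^{k-2,s+1}\longrightarrow\mathrm{E}_{2}^{k,s}\longrightarrow\mathrm{E}_{2}^{k+2,s-1}\longrightarrow\cdots\longrightarrow\mathrm{E}_{2}^{N,0},
\end{equation*}
and along the $k\geq1$ portion of each, the kernel and image of $\mathrm{d}_{2}$ are governed by $\mathrm{Ker}\,\psi_{\mathfrak{ba}_{n}}^{\bullet}$ and $\mathrm{Im}\,\psi_{\mathfrak{ba}_{n}}^{\bullet}$ intersected with the cocycle spaces, of dimensions built from $\mathfrak{a}_{n}^{\bullet}$ and $\mathfrak{b}_{n}^{\bullet}=\mathfrak{d}^{\bullet}(n,n)-\mathfrak{a}_{n}^{\bullet}$ via Lemma~\ref{bai}. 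Next I would argue that the sequence degenerates at $\mathrm{E}_{3}$: $\mathfrak{ba}_{n}$ carries the weight grading $\|z\|=2$, $\|x_{i}\|=\|y_{i}\|=1$; the complex $\mathfrak{ba}_{n}\otimes\bigwedge^{\bullet}\mathfrak{ba}_{n}^{\ast}$ is graded by it and the differential, hence every $\mathrm{d}_{r}$, preserves weight; in the rows $k\geq1$ the bidegree $(k,s)$ has weight $1-k-2s$ and $(k+r,s-r+1)$ has weight $1-k-2s+(r-2)$, so $\mathrm{d}_{r}=0$ for $r\geq3$ off the column $k=0$, while on the column $k=0$ one has $\mathrm{H}^{0}(\mathfrak{ba}_{n}/\F z,\mathfrak{ba}_{n})=\F z$, $\mathrm{d}_{2}^{0,s}=0$ (again since $z\otimes(\mathrm{d}z^{\ast})_{\mathfrak{ba}_{n}}$ is a coboundary), and a direct check that $\mathrm{d}_{3}^{0,s}=0$ by exhibiting a cocycle lift of $z\otimes z^{\ast^{s}}$, so $\mathrm{E}_{\infty}^{0,s}=\F$ for all $s\geq0$. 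Summing $\dim\mathrm{E}_{\infty}^{k-s,s}$ over $s$ then produces, from the row $s=0$, the term $2n\mathfrak{d}^{k}(n,n)-\mathfrak{d}^{k+1}(n,n)$; from the column $k=0$, the isolated $+1$; and from the rows $s\geq1$, the two finite sums $\sum_{i=1}^{k-1}(2n\mathfrak{a}_{n}^{k-i}-\mathfrak{b}_{n}^{k-i-1})$ and $\sum_{i=0}^{k-3}(\mathfrak{d}^{k-i-1}(n,n)-\mathfrak{b}_{n}^{k-i-3}-2n\mathfrak{b}_{n}^{k-i-2})$.

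The main obstacle is the $\mathfrak{ba}_{n}$ case: with infinitely many rows there is no formal collapse, so one must (i) pin down $\dim\mathrm{Ker}\,\mathrm{d}_{2}^{k,s}$ and $\dim\mathrm{Im}\,\mathrm{d}_{2}^{k,s}$ exactly---the cocycle condition and membership in $\mathrm{Ker}\,\psi_{\mathfrak{ba}_{n}}^{k}$ do not split off as a tensor factor, so the explicit description of $\mathrm{Ker}\,\psi_{\mathfrak{ba}_{n}}^{k}$ from Lemma~\ref{bai} has to be used; (ii) dispose of the single family of higher differentials $\mathrm{d}_{3}^{0,s}$ that weight does not already kill; and (iii) carry out the bookkeeping that fixes the summation ranges $1\leq i\leq k-1$ and $0\leq i\leq k-3$ together with the corrective $+1$. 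The even case, by contrast, follows immediately from Theorem~\ref{main1} and Lemmas~\ref{q} and \ref{bai}.
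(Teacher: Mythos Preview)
Your plan coincides with the paper's: both run the Hochschild--Serre spectral sequence~(\ref{26}) with $I=C(\mathfrak{g})=\F z$, identify $\mathrm{E}_{2}^{k,s}$ via Lemma~\ref{q}, describe $\mathrm{d}_{2}$ through the operator $\psi_{\mathfrak{g}}$ of Lemma~\ref{bai}, use $\mathrm{d}_{2}^{0,s}=0$ (because $z\otimes(\mathrm{d}z^{\ast})_{\mathfrak{g}}$ is a coboundary), and read off the Betti numbers from $\mathrm{E}_{3}=\mathrm{E}_{\infty}$. For part~(1) your argument is complete and matches the paper line for line; your small-$k$ checks are correct.

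For part~(2) you go somewhat \emph{beyond} the paper in one respect and stop \emph{short} of it in another. The paper simply asserts $\mathrm{E}_{\infty}^{k-i,i}=\mathrm{E}_{3}^{k-i,i}$ and then computes. Your weight argument ($\|z\|=2$, $\|x_{i}\|=\|y_{i}\|=1$) is a genuine addition: it is correct that the complex is graded by this weight, that for $k\geq1$ every class in $\mathrm{E}_{2}^{k,s}$ has weight $1-k-2s$, and that this forces $\mathrm{d}_{r}^{k,s}=0$ for $r\geq3$, $k\geq1$; you are also right that the single family $\mathrm{d}_{3}^{0,s}$ is weight-compatible and must be killed by hand via a cocycle lift of $z\otimes z^{\ast^{s}}$. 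On the other hand, the heart of the paper's computation is the exact value of $\dim\mathrm{Ker}\,\mathrm{d}_{2}^{k-i,i}$ for $1\leq i\leq k-1$: the paper introduces the auxiliary surjection
\[
\overline{\mathrm{d}}_{k,i}:\ \mathcal{V}_{\mathfrak{ba}_{n}}\otimes\mathrm{Ker}\,\psi_{\mathfrak{ba}_{n}}^{\,k-i}\ \longrightarrow\ \F z\otimes\Bigl(\bigwedge^{k-i-1}(\mathfrak{ba}_{n}/\F z)^{\ast}\wedge\F(\mathrm{d}z^{\ast})_{\mathfrak{ba}_{n}}\Bigr)
\]
and obtains $\dim\bigl(\mathrm{Ker}\,\mathrm{d}\cap\widetilde{\mathcal{V}}_{k,i}\bigr)=2n\,\mathfrak{a}_{n}^{\,k-i}-\mathfrak{d}^{\,k-i-1}(n,n)+\mathfrak{a}_{n}^{\,k-i-1}$, which, plugged into the telescoping identity~(\ref{2.2.1}), yields the two sums in the statement. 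Your proposal names this step as obstacle~(i) but does not carry it out; once you do, your ``bookkeeping'' (iii) reduces to exactly the substitution the paper performs, and your attribution of the individual summands (row $s=0$, column $k=0$, rows $s\geq1$) becomes precise rather than heuristic.
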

\begin{proof}
Suppose that $\mathfrak{g}$ is a Heisenberg Lie superalgebra.
Note that the center $C(\mathfrak{g})=\F z$. From Eq. (\ref{26}), for $k\geq 1$, and $0\leq i\leq k$, we have
\begin{equation}\label{3.2}
\mathrm{E}_{2}^{k-i,i}=\mathrm{H}^{k-i}(\mathfrak{g}/\F z,\mathfrak{g})\bigotimes\F z^{\ast^{i}}.
\end{equation}
Moreover, $\mathrm{E}_{\infty}^{k-i,i}=\mathrm{E}_{3}^{k-i,i}.$ Since $\mathrm{H}^{0}(\mathfrak{g}/\F z,\mathfrak{g})=\F z$, consider the mapping:
\begin{align*}
\mathrm{d}_{2}^{0,i}: \F (z\otimes z^{\ast^{i}})&\longrightarrow
\mathrm{H}^{2}(\mathfrak{g}/\F z,\mathfrak{g})\bigotimes\F z^{\ast^{i-1}},
\\
z\otimes z^{\ast^{i}}&\longmapsto iz\otimes (\mathrm{d} z^{\ast})_{\mathfrak{g}}\otimes  z^{\ast^{i-1}},
\end{align*}
where $i\geq 0$. Note that $z\otimes (\mathrm{d} z^{\ast})_{\mathfrak{g}}\in \mathrm{d}(\mathfrak{g}\bigotimes (\mathfrak{g}/\F z)^{\ast})$, which is zero in $\mathrm{H}^{2}(\mathfrak{g}/\F z,\mathfrak{g})$. Thus, we have
 \begin{equation}\label{3.3}
\mathrm{E}_{\infty}^{0,i}=\mathrm{E}_{3}^{0,i}=\mathrm{Ker}\ \mathrm{d}_{2}^{0,i}=\F \left(z\otimes z^{\ast^{i}}\right),
\end{equation}
for $i\geq 0.$

(1)
Suppose that $\mathfrak{g}=\mathfrak{h}_{2m,n}$. Note that $|z|=\overline{0}$. From Eq. (\ref{3.2}), for $k\geq 1$, we have
\begin{equation*}
\mathrm{E}_{2}^{k-i,i}
                   =\left\{
                       \begin{array}{ll}
                        \mathrm{H}^{k}(\mathfrak{h}_{2m,n}/\F z,\mathfrak{h}_{2m,n}), & \hbox{$i=0;$} \\
                        \mathrm{H}^{k-1}(\mathfrak{h}_{2m,n}/\F z,\mathfrak{h}_{2m,n})\bigotimes\F z^{\ast}, & \hbox{$i=1;$} \\
                        0, & \hbox{$i\geq2.$}
                        \end{array}
                       \right.
\end{equation*}
From Theorem \ref{main1} and Eq. (\ref{3.3}),
\begin{equation*}
\mathrm{E}_{\infty}^{k-i,i}=\left\{
                       \begin{array}{ll}
                        \mathrm{H}^{k}(\mathfrak{h}_{2m,n}/\F z,\mathfrak{h}_{2m,n})/ \mathrm{Im}\ \mathrm{d}_{2}^{k-2,1}, & \hbox{$i=0,\ k\geq 1;$} \\
                       \F \left(z\otimes z^{\ast}\right) , & \hbox{$i=k=1;$} \\
                        0, & \hbox{$i=1,\ k \geq 2;$ or $i\geq 2.$}
                        \end{array}
                       \right.
\end{equation*}
Moreover, we have
\begin{eqnarray*}
&&\mathrm{dim}\ \mathrm{H}^{1}(\mathfrak{h}_{2m,n},\mathfrak{h}_{2m,n})=\mathrm{dim}\ \mathrm{H}^{1}(\mathfrak{h}_{2m,n}/\F z,\mathfrak{h}_{2m,n})+1, \\
&&\mathrm{dim}\ \mathrm{H}^{2}(\mathfrak{h}_{2m,n},\mathfrak{h}_{2m,n})=\mathrm{dim}\ \mathrm{H}^{2}(\mathfrak{h}_{2m,n}/\F z,\mathfrak{h}_{2m,n}),
\end{eqnarray*}
and
\begin{equation*}
\mathrm{dim}\ \mathrm{H}^{k}(\mathfrak{h}_{2m,n},\mathfrak{h}_{2m,n})=\mathrm{dim}\ \mathrm{H}^{k}(\mathfrak{h}_{2m,n}/\F z,\mathfrak{h}_{2m,n})-\mathrm{dim}\ \mathrm{H}^{k-2}(\mathfrak{h}_{2m,n}/\F z,\mathfrak{h}_{2m,n}),
\end{equation*}
where $k\geq 3$.
Thanks to Lemma \ref{q}, the proof is complete.

(2)
Suppose that $\mathfrak{g}=\mathfrak{ba}_{n}$.
From the following sequence
\begin{align*}
&\mathrm{E}_{2}^{k-i-2,i+1}\stackrel{\mathrm{d}_{2}^{k-i-2,i+1}}{\longrightarrow} \mathrm{E}_{2}^{k-i,i}\stackrel{\mathrm{d}_{2}^{k-i,i}}{\longrightarrow} \mathrm{E}_{2}^{k-i+2,i-1},
\end{align*}
we have
\begin{equation*}
\mathrm{E}_{\infty}^{k-i,i}=\mathrm{Ker}\ \mathrm{d}_{2}^{k-i,i}/\mathrm{Im}\ \mathrm{d}_{2}^{k-i-2,i+1}.
\end{equation*}
Moreover, from Eq. (\ref{3.2}),
\begin{equation}\label{2.2.1}
\mathrm{dim}\ \mathrm{H}^{k}(\mathfrak{ba}_{n}, \mathfrak{ba}_{n})=\sum\limits_{i=0}^{k} \left(\mathrm{dim}\ \mathrm{Ker}\ \mathrm{d}_{2}^{k-i,i}+\mathrm{dim}\ \mathrm{Ker}\ \mathrm{d}_{2}^{k-i-2,i+1}
-\mathrm{dim}\ \mathrm{H}^{k-i-2}(\mathfrak{ba}_{n}/\F z,\mathfrak{ba}_{n})\right).
\end{equation}
Consider the linear mapping:
\begin{align*}
\mathrm{d}_{2}^{k-i,i}: \mathrm{H}^{k-i}(\mathfrak{ba}_{n}/\F z,\mathfrak{ba}_{n})\bigotimes\F z^{\ast^{i}}&\longrightarrow
\mathrm{H}^{k-i+2}(\mathfrak{ba}_{n}/\F z,\mathfrak{ba}_{n})\bigotimes\F z^{\ast^{i-1}},
\\
x\otimes z^{\ast^{i}}&\longmapsto ix\wedge (\mathrm{d} z^{\ast})_{\mathfrak{ba}_{n}}\otimes z^{\ast^{i-1}},
\end{align*}
where $x\in \mathrm{H}^{k-i}(\mathfrak{ba}_{n}/\F z,\mathfrak{ba}_{n})$.
In order to compute the dimension of $\mathrm{Ker}\ \mathrm{d}_{2}^{k-i,i}$,
we define the linear mapping
\begin{align*}
\widetilde{\mathrm{d}_{2}^{k-i,i}}: \left(\mathcal{V}_{\mathfrak{ba}_{n}}\bigotimes\bigwedge^{k-i} (\mathfrak{ba}_{n}/\F z)^{\ast}\right)\bigotimes \F z^{\ast^{i}}&\longrightarrow
\left(\mathcal{V}_{\mathfrak{ba}_{n}}\bigotimes\bigwedge^{k-i+2} (\mathfrak{ba}_{n}/\F z)^{\ast}\right)\bigotimes\F z^{\ast^{i-1}},
\\
 x\otimes z^{\ast^{i}}&\longmapsto ix\wedge (\mathrm{d} z^{\ast})_{\mathfrak{ba}_{n}}\otimes z^{\ast^{i-1}},
\end{align*}
where $k\geq1$, $1\leq i\leq k-1$ and $x\in\mathcal{V}_{\mathfrak{ba}_{n}}\bigotimes\bigwedge^{k-i} (\mathfrak{ba}_{n}/\F z)^{\ast}$.
 Then
$$\mathrm{Ker}\ \widetilde{\mathrm{d}_{2}^{k-i,i}}=\mathcal{V}_{\mathfrak{ba}_{n}}\bigotimes \mathrm{Ker}\ \psi_{\mathfrak{ba}_{n}}^{k-i}\bigotimes \F z^{\ast^{i}}.$$
Set
$$
\widetilde{\mathcal{V}}_{k,i}=\mathcal{V}_{\mathfrak{ba}_{n}}\bigotimes \mathrm{Ker}\ \psi_{\mathfrak{ba}_{n}}^{k-i}.$$
Then, for $k\geq 1$,
\begin{equation*}
\mathrm{Ker}\ \mathrm{d}_{2}^{k-i,i}=\left\{
                       \begin{array}{ll}
                        \mathrm{H}^{k}(\mathfrak{ba}_{n}/\F z,\mathfrak{ba}_{n}), & \hbox{$i=0;$} \\
 \left(\mathrm{Ker}\ \mathrm{d}\bigcap \widetilde{\mathcal{V}}_{k,i}\right)\bigotimes \F z^{\ast^{i}} , & \hbox{$1\leq i\leq k-1;$}\\
\F \left(z\otimes z^{\ast^{k}}\right) , & \hbox{$i=k.$}
                        \end{array}
                       \right.
\end{equation*}
To compute the dimension of $\mathrm{Ker}\ \mathrm{d}\bigcap \widetilde{\mathcal{V}}_{k,i}$,
for $k\geq 1$ and $1\leq i\leq k-1$, we set
\begin{align*}
&\overline{\mathrm{d}}_{k,i}: \widetilde{\mathcal{V}}_{k,i}\longrightarrow \F z \bigotimes \left(\bigwedge^{k-i-1}(\mathfrak{ba}_{n}/\F z)^{\ast}\bigwedge \F (\mathrm{d}z^{\ast})_{\mathfrak{ba}_{n}}\right)
\end{align*}
such that $\overline{\mathrm{d}}_{k,i}(x)=\mathrm{d}(x)$, $x\in\widetilde{\mathcal{V}}_{k,i}$.
Then, from Lemma \ref{bai}, $\overline{\mathrm{d}}_{k,i}$ is surjective. Moreover, we have
\begin{align*}
\mathrm{dim}\ \mathrm{Ker}\ \mathrm{d}\bigcap  \widetilde{\mathcal{V}}_{k,i}
=&\mathrm{dim}\ \mathrm{Ker}\ \overline{\mathrm{d}}_{k,i}\\
=&\mathrm{dim}\  \widetilde{\mathcal{V}}_{k,i}-
\mathrm{dim}\ \bigwedge^{k-i-1}(\mathfrak{ba}_{n}/\F z)^{\ast}\bigwedge \F (\mathrm{d}z^{\ast})_{\mathfrak{ba}_{n}}\\
=&2n\mathrm{dim}\ \mathrm{Ker}\ \psi_{\mathfrak{ba}_{n}}^{k-i}-\mathfrak{d}^{k-i-1}(n,n)+\mathrm{dim}\ \mathrm{Ker}\ \psi_{\mathfrak{ba}_{n}}^{k-i-1}.
\end{align*}
From Lemmas \ref{bai} and \ref{q} and Eq. (\ref{2.2.1}),
the proof is complete.
\end{proof}

\small\noindent \textbf{Acknowledgment}\\
The first author was supported by the NSF of China (11471090). The second author was supported by Graduate Innovation Fund of Jilin University (101832018C161). The third author was supported by the NSF of China (11771176).


\begin{thebibliography}{99}


\bibitem{1} D. B. Fuchs and D. A. Leites. Cohomology of Lie superalgebras.  \textit{C. R. Acad. Bulgare Sci.} \textbf{37} (1984)
1595--1596.

\bibitem{2}
D.B. Fuchs.
Cohomology of infinite-dimensional Lie algebras.
\emph{Contemporary Soviet Mathematics. Consultants Bureau, New York.} 1986.

\bibitem{3} M. Scheunert and R.B. Zhang. Cohomology of Lie superalgebras and their generalizations. \emph{J. Math. Phys.} \textbf{39} (1998) 5024--5061.


\bibitem{4} Y.C. Su and R.B. Zhang. Cohomology of Lie superalgebras $\mathfrak{sl}_{m|n}$ and $\mathfrak{osp}_{2|2n}$. \emph{Proc. Lond. Math. Soc.} \textbf{94} (1) (2007) 91--136.


\bibitem{5}
S. Bouarroudj, P. Grozman, A. Lebedev, and D. Leites.
Divided power (co)homology. Presentations of simple finite dimensional modular Lie superalgebras with Cartan matrix.
\emph{Homol. Homotopy Appl.} \textbf{12}(1) (2010) 237--278.

\bibitem{6} I.M. Musson.
Lie superalgebras and enveloping algebras.
\emph{American Mathematical Society Providence, Rhode Island.} 2013.



\bibitem{7}
K. Iohara  and Y. Koga.
Central extensions of Lie superalgebras.
\emph{Comment. Math. Helv.} \textbf{76} (2001) 110--154.

\bibitem{8}
W. Bai  and W. Liu.
Extensions of Lie superalgebras by Heisenberg Lie superalgebras.
\emph{Colloq. Math.} \textbf{153}(2) (2018) 209--218.

\bibitem{9} W. Bai  and W. Liu.
Cohomology of Heisenberg Lie superalgebras.
\emph{J. Math. Phys.} \textbf{58} (2017) 021701 (15 pages).


\bibitem{10}
Y. Yang  and W. Liu.
On cohomology of filiform Lie superalgebras.
\emph{ J. Geom. Phys.} \textbf{134} (2018) 212--234.

\bibitem{11}
 W. Liu and Y. Yang.
 Cohomology of model filiform Lie superalgebras.
 \emph{J. Algebra Appl.} \textbf{17}(4) (2018) 1850074 (13 pages).




\bibitem{12} L.J. Santharoubane.
Cohomology of Heisenberg Lie algebras.
\emph{Proc. Am. Math. Soc.} \textbf{87}(1) (1983) 23--28.

\bibitem{13} E. Sk\"{o}ldberg.
The homology of Heisenberg Lie algebras over fields of characteristic two.
\emph{Math. Proc. R. Ir. Acad.}  \textbf{105A}(2) (2005) 47--49.

\bibitem{14} G. Cairns  and S. Jambor.
The cohomology of the Heisenberg Lie algebras over fields of finite characteristic.
\emph{Proc. Am. Math. Soc. }  \textbf{136}(11) (2008) 3803--3807.

\bibitem{15} L. Magnin.
Cohomologie adjointe des algebres de Lie de
Heisenberg.
\emph{ Comm. Algebra}  \textbf{21}(6) (1993) 2101--2129.


\bibitem{16} L. Cagliero  and P. Tirao.
The cohomology of the cotangent bundle
of Heisenberg groups.
\emph{Adv. Math. }  \textbf{181} (2004) 276--307.

\bibitem{17} L. Cagliero  and P. Tirao. On the adjoint homology of 2-step nilpotent Lie algebras. \emph{Bull. Austral. Math. Soc.} \textbf{71} (2005)  177--182.


\bibitem{18} M.A. Alvarez.
The adjoint homology of Heisenberg Lie algebras.
\emph{ J. Lie Theory }  \textbf{25} (2015) 91--104.

\bibitem{19} M.C. Rodr\'{i}guez-Vallarte, G. Salgado, and O.A. S\'{a}nchez-Valenzuela.
Heisenberg Lie superalgebras and their invariant
superorthogonal and supersymplectic forms.
\emph{J. Algebra}  \textbf{332} (2011) 71--86.






\end{thebibliography}
\end{document}